\newcommand{\C}{\mathcal C}
\newcommand{\U}{\mathcal U}
\newcommand{\V}{\mathcal V}
\newcommand{\IR}{\mathbb R}
\newcommand{\w}{\omega}
\newcommand{\WAP}{\mathrm{WAP}}
\newcommand{\AP}{\mathrm{AP}}
\newcommand{\SAP}{\mathrm{SAP}}
\newcommand{\dist}{\mathrm{dist}}
\newcommand{\pr}{\mathrm{pr}}
\newcommand{\Ra}{\Rightarrow}
\newtheorem{theorem}{Theorem}[section]
\newtheorem{proposition}[theorem]{Proposition}
\newtheorem{lemma}[theorem]{Lemma}
\newtheorem{example}[theorem]{Example}
\newtheorem{problem}[theorem]{Problem}
\title[Compact extensions of topological semigroups]{Openly factorizable spaces and compact extensions of topological semigroups}
\author{Taras Banakh}
\address{Instytut Matematyki, Akademia
\'Swi\c etokrzyska, Kielce, Poland \\ and Department of
Mathematics, Lviv National University, \\ Universytetska 1, 79000,
Ukraine} \email{tbanakh@yahoo.com}
\author{Svetlana Dimitrova}
\address{National Technical University ``Kharkiv Polytechnical Institute", Frunze 21, Kharkiv, 61002, Ukraine}
\email{s.dimitrova@mail.ru}
\subjclass{22A15; 54B30; 54C20; 54C08; 54D35}
\keywords{topological semigroup, semigroup compactification, inverse spectrum, opseudocompact space, openly factorizable space, openly generated space, Eberlein compact, Corson compact, Valdivia compact}
\begin{document}

\begin{abstract} We prove that the semigroup operation of a topological semigroup $S$ extends to a continuous semigroup operation on its the Stone-\v Cech compactification $\beta S$ provided $S$ is a pseudocompact openly factorizable space, which means that each map $f:S\to Y$ to a second countable space $Y$ can be written as the composition $f=g\circ p$ of an open map $p:X\to Z$ onto a second countable space $Z$ and a map $g:Z\to Y$. We present a spectral characterization of openly factorizable spaces and establish some properties of such spaces.
\end{abstract}
\maketitle

This paper was motivated by the problem of detecting topological semigroups that embed into compact topological semigroups. One of the ways to attack this problem is to find conditions on a topological semigroup $S$ guaranteeing that the semigroup operation of $S$ extends to a continuous semigroup operation on the Stone-\v Cech compactification $\beta S$ of $S$. A crucial step in this direction was made by E.Reznichenko \cite{Rez} who proved that the semigroup operation on a pseudocompact topological semigroup $S$ extends to a separately continuous semigroup operation on $\beta S$. In this paper we show that the extended operation on $\beta S$ is continuous if the space $S$ is separable and openly factorizable, which means that each continuous map $f:S\to Y$ to a second countable space $Y$ can be written as the composition $f=g\circ p$ of an open continuous map $p:X\to Z$ onto a second countable space $Z$ and a continuous map $g:Z\to Y$. The class of openly factorizable spaces turned to be interesting by its own so we devote Sections~\ref{s2}, \ref{s3} to studying such spaces.

We recall that the {\em Stone-\v Cech compactification} of a Tychonov space $X$ is a
compact Hausdorff space $\beta X$ containing $X$ as a dense subspace so that each continuous map $f:X\to Y$ to a compact Hausdorff space $Y$ extends to a continuous map $\bar f:\beta X\to Y$. 

Replacing in this definition compact Hausdorff spaces by real complete spaces we obtain the definition of the Hewitt completion $\upsilon X$ of $X$. We recall that a topological space $X$ is {\em  real complete} if $X$ is homeomorphic to a closed subspace of some power $\IR^\kappa$ of the real line. Thus a {\em Hewitt completion} of a Tychonov space $X$ is a real complete space $\upsilon X$ containing $X$ as a dense subspace so that each continuous map $f:X\to Y$ to a real complete space $Y$ extends to a continuous map $\upsilon f:\upsilon X\to Y$. By \cite[3.11.16]{En}, the Hewitt completion $\upsilon X$ can be identified with the subspace 
$$\{x\in\beta X:G\cap X\ne\emptyset\mbox{ for any $G_\delta$-set $G\subset\beta X$ with $x\in G$}\}$$of the Stone-\v Cech compactification $\beta X$ of $X$.

The Hewitt completion $\upsilon X$ of a Tychonov space $X$ coincides with its Stone-\v Cech compactification $\beta X$ if and only if the space $X$ is {\em pseudocompact} in the sense that each continuous real-valued function on $X$ is bounded, see \cite[\S3.11]{En}. On the other hand, if a Tychonov space $Z$ is real complete, then $\upsilon Z=Z$, see  \cite[3.11.12]{En}.

The problem of extending the group operation from a (para)topological group $G$ to its Stone-\v Cech or Hewitt extensions have been considered in \cite{Rez}, \cite{AH}, \cite{PT}, \cite{RU}. In this paper we address a similar problem for topological semigroups.  
All topological spaces appearing in this paper are Tychonov.

\section{Semigroup compactifications of topological semigroups}

In this section we recall some information on semigroup compactifications of a given (semi)topological semigroup $S$.

By a {\em semitopological semigroup} we understand a topological space $S$ endowed with a separately continuous semigroup operation $*:S\times S\to S$. If the operation is jointly continuous, then $S$ is called a {\em topological semigroup}.

Let $\C$ be a class of compact Hausdorff semitopological semigroups. By a {\em $\C$-compactification} of a semitopological semigroup $S$ we understand a pair $(\C(S),\eta)$ consisting of a compact semitopological semigroup $\C(S)\in\C$ and a continuous homomorphism $\eta:S\to\C(S)$ (called the {\em canonic homomorphism}) such that for each continuous homomorphism $h:S\to K$ to a semitopological semigroup $K\in\C$ there is a unique continuous homomorphism $\bar h:\C(S)\to K$ such that $h=\bar h\circ\eta$. It follows that any two $\C$-compactifications of $S$ are topologically isomorphic. We shall be interested in $\C$-compactifications for the following classes of semigroups:
\begin{itemize}
\item $\WAP$ of compact semitopological semigroups; 
\item $\AP$ of compact topological semigroups;
\item $\SAP$ of compact topological groups.
\end{itemize}
The corresponding $\C$-compactifications of a semitopological semigroup $S$ will be denoted by $\WAP(S)$, $\AP(S)$, and $\SAP(S)$. The notation came from the abbreviations for weakly almost periodic, almost periodic, and strongly almost periodic function rings that determine those compactifications, see \cite[\S III.2]{Rup}.

The inclusions of the classes $\SAP\subset\AP\subset\WAP$ induce canonical continuous homomorphisms 
$$\eta:S\to\WAP(S)\to\AP(S)\to\SAP(S)$$
for each semitopological semigroup $S$.
Since the space $\WAP(S)$ is compact, the canonical map $\eta:S\to\WAP(S)$ uniquely extends to a continuous map $\beta\eta:\beta S\to\WAP(S)$ defined on the Stone-\v Cech compactification of $S$.

It should be mentioned that the canonic homomorphism $\eta:S\to \WAP(S)$ needs not be injective. For example, for the group $H_+[0,1]$ of orientation-preserving homeomorphisms of the interval the WAP-compactification is a singleton, see \cite{Megrel}.  However, for pseudocompact semitopological semigroups the situation is more optimistic. The following two results are due to E.Reznichenko \cite{Rez}. They allow us to identify the WAP-compactification $\WAP(S)$ of a (countably compact) pseudocompact topological (semi)semigroup $S$ with the Stone-\v Cech compactification $\beta S$ of $S$. We recall that a topological space $X$ is {\em countably compact} if each countable open cover of $X$ has a finite subcover.

\begin{theorem}[Reznichenko]\label{rez1} For any countably compact semitopological semigroup $S$ the semigroup operation $S\times S\to S$ extends to a separately continuous semigroup operation $\beta S\times\beta S\to\beta S$, which implies that the canonic map $\beta\eta:\beta S\to\WAP(S)$ is a homeomorphism.
\end{theorem}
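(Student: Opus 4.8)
The plan is to prove that every bounded continuous function on $S$ is weakly almost periodic, since this single fact yields both conclusions at once. Recall that $\WAP(S)$ is, up to topological isomorphism, the Gelfand spectrum of the algebra $\mathcal W(S)\subseteq C_b(S)$ of weakly almost periodic functions, and that as a member of the class $\WAP$ it carries a separately continuous multiplication. Hence if one shows $\mathcal W(S)=C_b(S)$, then the spectrum of $\mathcal W(S)$ is exactly $\beta S$, the canonic map $\eta$ becomes the dense embedding $S\hookrightarrow\beta S$, and $\beta\eta:\beta S\to\WAP(S)$ is a homeomorphism; the separately continuous operation carried by $\WAP(S)=\beta S$ is then the desired extension of $*$.

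To establish $\mathcal W(S)=C_b(S)$ I would invoke Grothendieck's double-limit criterion: a bounded continuous $f:S\to\IR$ is weakly almost periodic if and only if for all sequences $(x_n)$ and $(y_m)$ in $S$ one has
$$\lim_n\lim_m f(x_n*y_m)=\lim_m\lim_n f(x_n*y_m)$$
whenever both iterated limits exist. So I would fix such $f$, $(x_n)$, $(y_m)$, name the two iterated limits $a$ and $b$, and aim to produce a single point of $S$ at which $f$ takes both values, forcing $a=b$.

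This is where countable compactness enters decisively: it guarantees that $(x_n)$ has a cluster point $x\in S$ and $(y_m)$ has a cluster point $y\in S$. For each fixed $n$ the map $z\mapsto f(x_n*z)$ is continuous, so $f(x_n*y)$ is a cluster value of the convergent sequence $(f(x_n*y_m))_m$, whence $f(x_n*y)=\lim_m f(x_n*y_m)$; letting $n\to\infty$ gives $\lim_n f(x_n*y)=a$, and applying continuity of $z\mapsto f(z*y)$ at the cluster point $x$ yields $f(x*y)=a$. Running the symmetric computation with the roles of the two coordinates interchanged, and using the cluster point $x$ first and $y$ second, gives $f(x*y)=b$. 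Therefore $a=b$ and the criterion holds for every bounded continuous $f$.

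The main obstacle is precisely this interchange-of-limits step, and it is exactly there that countable compactness is indispensable: without cluster points for arbitrary sequences one could not locate the point $x*y$ that reconciles the two iterated limits, and separate continuity alone would be insufficient. Once the double-limit condition is verified for all bounded continuous functions, the equality $\mathcal W(S)=C_b(S)$, and hence the identification $\WAP(S)=\beta S$ together with the separate continuity of the extended operation, follows automatically, completing the proof.
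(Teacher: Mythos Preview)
The paper does not supply a proof of this theorem; it is stated as a result of Reznichenko with a reference to \cite{Rez}, so there is no in-paper argument to compare against.

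Your proposal is correct and is essentially the classical proof of this fact. The identification of $\WAP(S)$ with the Gelfand spectrum of the WAP-function algebra $\mathcal W(S)\subseteq C_b(S)$ reduces the problem to showing $\mathcal W(S)=C_b(S)$, and Grothendieck's sequential double-limit criterion is the standard tool for this. The key computation---that a cluster point $y$ of $(y_m)$ under a continuous map $z\mapsto f(x_n*z)$ is sent to a cluster point of the convergent sequence $(f(x_n*y_m))_m$, hence to its limit---is exactly right, and iterating it with the cluster point $x$ of $(x_n)$ pins $f(x*y)$ to both iterated limits simultaneously. Countable compactness is used precisely where you say: to produce the cluster points $x$ and $y$, without which the interchange cannot be anchored at a point of $S$. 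Once $\mathcal W(S)=C_b(S)$ is established, the Gelfand spectrum is $\beta S$, the canonic map $\eta$ becomes the dense inclusion $S\hookrightarrow\beta S$, and the separately continuous multiplication carried by $\WAP(S)$ transports to $\beta S$ and restricts to the original operation on $S$ because $\eta$ is a homomorphism. Both conclusions of the theorem then follow as you claim.
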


The same conclusion holds for  pseudocompact topological semigroups.

\begin{theorem}[Reznichenko]\label{rez2} For any pseudocompact topological semigroup $S$ the semigroup operation $S\times S\to S$ extends to a separately continuous semigroup operation $\beta S\times\beta S\to\beta S$, which implies that the canonic map $\beta\eta:\beta S\to\WAP(S)$ is a homeomorphism.
\end{theorem}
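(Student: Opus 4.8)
My plan is to split the statement into its two assertions: first, that $*$ extends to a separately continuous associative operation on $\beta S$; and second, that this forces $\beta\eta$ to be a homeomorphism. I would dispatch the second assertion quickly, as it is essentially formal. A separately continuous associative extension makes $\beta S$ a compact semitopological semigroup together with the dense topological embedding $S\hookrightarrow\beta S$, so the universal property defining $\WAP(S)$ yields a continuous homomorphism $\WAP(S)\to\beta S$ whose composite with $\eta$ is the inclusion; that $\beta\eta$ is itself a homomorphism follows by fixing one variable at a time and invoking separate continuity together with the density of $S$, and that the two maps are mutually inverse follows because they are continuous and agree on the dense subset $S$.

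The real work is the extension, which I would build in two stages. For each $s\in S$ the translations $\lambda_s,\rho_s:S\to S$ are continuous, so by the universal property of $\beta S$ they extend to continuous self-maps $\bar\lambda_s,\bar\rho_s:\beta S\to\beta S$; setting $s*q:=\bar\lambda_s(q)$ defines a map $S\times\beta S\to\beta S$ that is continuous in $q$ and restricts to $*$ on $S\times S$. The whole problem then reduces to showing that this map is continuous in its first variable as well, for once both $q\mapsto s*q$ and $s\mapsto s*q$ are continuous, fixing $q$ and extending $s\mapsto s*q$ over the compact space $\beta S$ produces $p*q$; a symmetric construction and comparison yields separate continuity of the resulting operation $\tilde*:\beta S\times\beta S\to\beta S$, and associativity then propagates from the dense set $S\times S\times S$ by the same one-variable-at-a-time argument.

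I would phrase the remaining separate continuity function-wise. For $f\in C(S)=C^*(S)$ (bounded because $S$ is pseudocompact) the function $F(s,t)=f(st)$ is jointly continuous on $S\times S$, extends in $t$ to $\Phi_f(s,q)=\bar f(s*q)$ on $S\times\beta S$, and the claim is that each $\Phi_f$ is continuous in $s$ too. Equivalently, every such $f$ is weakly almost periodic, so by Grothendieck's double-limit criterion it suffices to show that for any sequences $(s_n),(t_m)$ in $S$ the two iterated limits of $f(s_nt_m)$ agree whenever both exist.

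This double-limit interchange is the main obstacle and the only place where the hypotheses are used essentially. The contrast with Theorem~\ref{rez1} is instructive: countable compactness supplies cluster points $a,b$ of $(s_n),(t_m)$ inside $S$, and these alone suffice, with mere separate continuity, to collapse both iterated limits to the single value $f(ab)$. Pseudocompactness provides no such cluster points, since a pseudocompact space may contain a sequence with no cluster point, so the argument must instead pass to cluster points of $(s_n),(t_m)$ in the compact space $\beta S$ and transport the limits there. I expect the delicate point to be justifying the interchange of $\lim_n$ with the limit along a net $t\to q$ uniformly; this is precisely where the joint continuity of $*$ must substitute for the missing cluster points, while pseudocompactness (via feeble compactness, equivalently the $G_\delta$-density of $S$ in $\beta S$) supplies the compactness that legitimizes the interchange. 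Once this is secured, the remaining steps are the routine density and separate-continuity arguments indicated above.
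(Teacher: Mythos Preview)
The paper does not prove this theorem: it is stated as a result of Reznichenko with a reference to \cite{Rez}, and no argument is supplied. There is therefore no proof in the paper to compare your proposal against.

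As a standalone attempt at Reznichenko's theorem, your outline is on the right track. The formal derivation of the second assertion from the first is correct, and the reduction of the extension problem to showing that every $f\in C(S)$ is weakly almost periodic, via Grothendieck's double-limit criterion, is the standard and correct strategy. The gap is that you do not actually carry out the crucial step: you acknowledge that the interchange of iterated limits is ``the delicate point'' and name the ingredients (joint continuity of $*$, $G_\delta$-density of $S$ in $\beta S$ from pseudocompactness), but you do not show how they combine to force $\lim_n\lim_m f(s_n t_m)=\lim_m\lim_n f(s_n t_m)$. That is exactly where Reznichenko's argument does its work, and without it what you have written is an accurate roadmap rather than a proof. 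If you intend this as more than a sketch, you need to supply that step explicitly.
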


If a topological semigroup $S$ has pseudocompact square, then its Stone-\v Cech compactification $\beta S$ coincides with its $\AP$-compactification.

\begin{theorem} For any topological semigroup $S$ with pseudocompact square $S\times S$ the semigroup operation $S\times S\to S$ extends to a continuous semigroup operation $\beta S\times\beta S\to\beta S$, which implies that the canonic maps $\beta S\to\WAP(S)\to\AP(S)$ are homeomorphisms.
\end{theorem}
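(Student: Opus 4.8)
The plan is to upgrade Reznichenko's separately continuous extension to a jointly continuous one by exploiting the pseudocompactness of the \emph{square} $S\times S$ rather than just of $S$. First I note that $S$ itself is pseudocompact, being the continuous image of the pseudocompact space $S\times S$ under the coordinate projection $\pr:S\times S\to S$. The decisive tool is the classical theorem of Glicksberg: since $S\times S$ is pseudocompact, the canonical map $\beta(S\times S)\to\beta S\times\beta S$ induced by the embedding $S\times S\hookrightarrow\beta S\times\beta S$ is a homeomorphism, so we may identify $\beta(S\times S)=\beta S\times\beta S$.

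Granting this identification, the multiplication extends as follows. The operation $*:S\times S\to S$ composed with the canonical embedding $S\hookrightarrow\beta S$ is a continuous map from $S\times S$ into the compact Hausdorff space $\beta S$, so by the universal property of the Stone-\v Cech compactification it extends to a continuous map $\bar *:\beta(S\times S)=\beta S\times\beta S\to\beta S$. By construction $\bar *$ restricts to $*$ on $S\times S$, and it is \emph{jointly} continuous precisely because it is defined on the genuine product $\beta S\times\beta S$. Associativity follows from a density argument: the two continuous maps $(x,y,z)\mapsto\bar *(\bar *(x,y),z)$ and $(x,y,z)\mapsto\bar *(x,\bar *(y,z))$ from $\beta S\times\beta S\times\beta S$ to $\beta S$ coincide on the dense subset $S\times S\times S$ by associativity of $*$, hence coincide everywhere since $\beta S$ is Hausdorff. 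Thus $(\beta S,\bar *)$ is a compact topological semigroup containing $S$ as a dense subsemigroup.

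To realize the required compactifications I would verify the universal property of $\AP(S)$ directly. Given a continuous homomorphism $h:S\to K$ into a compact topological semigroup $K$, its Stone-\v Cech extension $\bar h:\beta S\to K$ is automatically a homomorphism: the continuous maps $(x,y)\mapsto\bar h(\bar *(x,y))$ and $(x,y)\mapsto\bar h(x)\cdot\bar h(y)$ agree on the dense set $S\times S$, and here the joint continuity of the operation of $K$ is used to guarantee that the second map is continuous, whence the two agree on all of $\beta S\times\beta S$. Uniqueness of $\bar h$ is clear since $S$ is dense in $\beta S$ and $K$ is Hausdorff. Hence $(\beta S,\bar *)$ with the embedding is an $\AP$-compactification of $S$, and the canonical map $\beta S\to\AP(S)$ is a homeomorphism. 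Since $S$ is pseudocompact, Theorem~\ref{rez2} already gives that $\beta S\to\WAP(S)$ is a homeomorphism, so the whole chain $\beta S\to\WAP(S)\to\AP(S)$ consists of homeomorphisms.

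The main obstacle, and the whole point of strengthening the hypothesis, is the passage from separate to joint continuity. For pseudocompact $S$ alone the canonical map $\beta(S\times S)\to\beta S\times\beta S$ need not be a homeomorphism, and Reznichenko's extension need not be jointly continuous; the extra assumption that $S\times S$ be pseudocompact is exactly what forces $\beta(S\times S)=\beta S\times\beta S$ through Glicksberg's theorem, after which joint continuity of $\bar *$ and the homomorphism property of Stone-\v Cech extensions into topological semigroups come essentially for free.
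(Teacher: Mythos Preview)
Your proof is correct and follows essentially the same route as the paper: both hinge on Glicksberg's theorem to identify $\beta(S\times S)$ with $\beta S\times\beta S$, after which the Stone--\v Cech extension of the multiplication is a jointly continuous operation on $\beta S$. The only real difference is in how associativity is obtained. The paper first invokes Reznichenko's Theorem~\ref{rez2} to get the separately continuous (and already associative) extension $\bar\mu$, and then checks that $\bar\mu$ coincides with the jointly continuous map $\beta\mu\circ(\beta i)^{-1}$ by density; you instead bypass Reznichenko for the construction and prove associativity directly by a density argument on the triple product $\beta S\times\beta S\times\beta S$. Your variant is a bit more self-contained for building the extension, though you still appeal to Theorem~\ref{rez2} at the end for the $\WAP$ identification, whereas the paper uses it up front.
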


\begin{proof} By Theorem~\ref{rez2}, the semigroup operation $\mu:S\times S\to S$ of $S$ extends to a separately continuous semigroup operation $\overline{\mu}:\beta S\times\beta S\to\beta S$ on  $\beta S$. On the other hand, the operation $\mu:S\times S\to S\subset\beta S$ extends to a continuous map $\beta\mu:\beta(S\times S)\to\beta S\times\beta S$. By the Glicksberg Theorem \cite[3.12.20(c)]{En}, the pseudocompactness of the square $S\times S$ implies that the Stone-\v Cech extension $\beta i:\beta(S\times S)\to\beta S\times\beta S$ of the inclusion map $i:S\times S\to\beta S\times\beta S$ is a homeomorphism. Observe that the maps $\beta \mu$ and $\bar\mu\circ\beta i$ coincide on the dense subset $S\times S$ of $\beta S\times\beta S$. It is an easy exersice to check that those maps coincide everywhere, which implies that the map $\bar\mu=\beta\mu\circ (\beta i)^{-1}$ is continuous. This means that $\beta S$ is a compact topological semigroup and hence the canonic map $\beta\eta:\beta S\to\AP(S)$ has continuous inverse.
\end{proof}

It should be mentioned that for a pseudocompact topological semigroup $S$ the canonic map $\eta:S\to\AP(S)$ needs not be a topological embedding.  The following counterexample is constructed in \cite{BDG}. 

\begin{example}\label{ex1} If there is a Tkachenko-Tomita group, then there is a countably compact topological semigroup $S$ for which the canonic homomorphism $\eta:S\to\AP(S)$ is not injective.
\end{example}

By a Tkachenko-Tomita group we understand a commutative torsion-free countably compact topological group without non-trivial convergent sequences. The first example of such a group was constructed by M.Tkachenko \cite{Tka} under the Continuum Hypothesis, which was later weakened to some forms of the Martin Axiom by A.Tomita et al. \cite{Tom96},
\cite{KTW}, \cite{GFTW}, \cite{MGT}. We do not know if a Tkachenko-Tomita group exists in ZFC.

Example~\ref{ex1} shows that one should impose rather strong restrictions on a topological semigroup $S$ to guarantee that the canonic homomorphism $S\to\AP(S)$ (or $S\to\SAP(S)$) is an embedding. 

Observe that for every semitopological semigroup $S$ its SAP-compactification $\SAP(S)$ is a compact topological group. It is well-known that a semitopological semigroup $S$ is topologically isomorphic to a subgroup of a compact topological group if and if $S$ is a totally bounded topological group. We recall that a topological group $G$ is called {\em totally bounded} if for every non-empty open subset $U\subset G$ there is a finite subset $F\subset G$ such that $G=FU=UF$. 

The following important result can be found in \cite[III.3.3]{Rup}.

\begin{theorem}[Ruppert]\label{rup} For each totally bounded topological group $G$ the canonic homomorphisms $\WAP(G)\to\AP(G)\to\SAP(G)$ are homeomorphisms and the canonic map $\eta:G\to \SAP(G)$ is a topological embedding.
\end{theorem}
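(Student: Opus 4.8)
The plan is to realize all three compactifications through the group completion of $G$ and then treat the two assertions separately. First I would show that the two-sided (Raikov) completion $\bar G$ of $G$ is a \emph{compact} topological group: since $G$ is totally bounded, so is $\bar G$, and a complete totally bounded topological group is compact; moreover the canonical map $\eta_0\colon G\to\bar G$ is a topological embedding onto a dense subgroup. This already delivers the embedding assertion, once $\bar G$ is identified with $\SAP(G)$.

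To identify $\bar G$ with $\SAP(G)$ I would verify the universal property of the $\SAP$-compactification for the pair $(\bar G,\eta_0)$. Given a continuous homomorphism $h\colon G\to K$ into a compact topological group $K$, the group $K$ is complete in its unique group uniformity and $h$ is uniformly continuous with respect to the two-sided uniformities; hence $h$ extends uniquely to a continuous homomorphism $\bar h\colon\bar G\to K$. As $\bar G$ is itself a compact topological group, this shows that $(\bar G,\eta_0)$ is the $\SAP$-compactification, and therefore $\eta\colon G\to\SAP(G)$ is a topological embedding.

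For the homeomorphisms $\WAP(G)\to\AP(G)\to\SAP(G)$ it suffices to prove $\WAP(G)=\SAP(G)$, for which I would pass to the describing function algebras $\SAP(G)\subseteq\AP(G)\subseteq\WAP(G)$ inside $C_b(G)$ and show they all coincide. The inclusion $\SAP(G)\subseteq\WAP(G)$ is clear, so the point is the reverse. Here I would use two classical ingredients: every weakly almost periodic function on a topological group is both left and right uniformly continuous, so each $f\in\WAP(G)$ extends to a continuous function on the compact group $\bar G$; and, conversely, on the compact group $\bar G$ every continuous function is weakly almost periodic, so restriction along $\eta_0$ identifies $\WAP(G)$ with $C(\bar G)$. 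By the Peter--Weyl theorem the matrix coefficients of finite-dimensional unitary representations of $\bar G$ are dense in $C(\bar G)$, and such coefficients restrict to strongly almost periodic functions on $G$; hence $\SAP(G)$ is dense in, and therefore equal to, $\WAP(G)=C(\bar G)$. Taking Gelfand spectra converts this equality of algebras into the desired homeomorphisms $\WAP(G)=\AP(G)=\SAP(G)=\bar G$.

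I expect the main obstacle to be the step asserting that weakly almost periodic functions extend to the completion, i.e.\ that the \emph{a priori} larger algebra $\WAP(G)$ produces no compactification bigger than $\bar G$; concretely this rests on the uniform continuity of WAP functions together with the weak compactness of their translate sets, and then on Peter--Weyl to collapse $\WAP$ down to $\SAP$ on the compact group. One should also check that $\eta(G)$ is dense in each of the three compactifications, so that maps determined on $\eta(G)$ are determined everywhere; this follows from the function-algebra description of the $\C$-compactifications.
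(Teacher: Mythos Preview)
The paper does not supply a proof of this theorem; it is quoted as a known result from Ruppert's monograph \cite[III.3.3]{Rup}. So there is no in-paper argument to compare your proposal against.

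Assessed on its own, your outline is a correct and standard route. The identification of the Raikov completion $\bar G$ with $\SAP(G)$ via the universal property is straightforward for totally bounded $G$, and yields the embedding statement immediately. For the collapse $\WAP(G)=\AP(G)=\SAP(G)$, the substantive ingredient you single out is exactly the right one: every weakly almost periodic function on a topological group is left and right uniformly continuous (a classical fact, see e.g.\ Berglund--Junghenn--Milnes); for totally bounded $G$ all group uniformities coincide, so each $f\in\WAP(G)$ extends to $\bar G$, giving $\WAP(G)\subseteq C(\bar G)|_G$.

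One simplification: the appeal to Peter--Weyl is superfluous. Once you have established that $(\bar G,\eta_0)$ realizes the $\SAP$-compactification, the function algebra $\SAP(G)\subseteq C_b(G)$ is \emph{by definition} the pullback $C(\bar G)|_G$ along $\eta_0$. Combining this with the inclusion $\WAP(G)\subseteq C(\bar G)|_G$ just obtained and the trivial chain $\SAP(G)\subseteq\AP(G)\subseteq\WAP(G)$ already forces equality of all three algebras, without invoking representation theory. Your final remark about density of $\eta(G)$ in each compactification is correct and follows, as you say, from the universal property (or equivalently from the function-algebra description).
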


The same conclusion holds for Tychonov pseudocompact topological semigroups that contain dense totally bounded topological subgroups.

\begin{theorem}\label{totalbound} If a pseudocompact topological semigroup $S$ contains a totally bounded topological group $H$ as a dense subgroup, then the canonic maps $$\beta S\to\WAP(S)\to\AP(S)\to\SAP(S)$$ are homeomorphisms.
\end{theorem}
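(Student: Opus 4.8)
The plan is to reduce everything to one structural fact: that $\beta S$, equipped with Reznichenko's operation, is a \emph{compact topological group}. Granting this, the theorem follows formally. By Theorem~\ref{rez2} the canonic map $\beta S\to\WAP(S)$ is a homeomorphism and $\beta S$ is a compact semitopological semigroup; if in addition $\beta S$ is a compact topological group, then it lies in the class $\SAP$. Consequently the canonic homomorphism $\WAP(S)\to\SAP(S)$ and the homomorphism $\SAP(S)\to\WAP(S)$ furnished by the universal property of $\SAP(S)$ applied to $\eta\colon S\to\WAP(S)=\beta S$ are mutually inverse, by the uniqueness clauses of the two universal properties. As these maps factor through $\AP(S)$, each of the canonic maps $\beta S\to\WAP(S)\to\AP(S)\to\SAP(S)$ is a continuous bijection between compact Hausdorff spaces, hence a homeomorphism.

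So the whole proof reduces to showing that $\beta S$ is a compact topological group, and here the density of the totally bounded group $H$ is decisive. First I would note that, by Theorem~\ref{rez2}, $\beta S$ carries a separately continuous multiplication extending that of $S$, so that every bounded continuous function on $S$ is weakly almost periodic (it extends over the semitopological semigroup $\beta S$); in particular its restriction to the subgroup $H$ is a weakly almost periodic, and therefore left uniformly continuous, function on $H$. Since the (compact, hence unique) uniformity of $\beta S$ is generated by the functions of $C(\beta S)=C_b(S)$, this says precisely that the inclusion $\iota\colon H\hookrightarrow\beta S$ is uniformly continuous with respect to the group uniformity of $H$. As $H$ is totally bounded, Theorem~\ref{rup} embeds it densely into the compact topological group $\SAP(H)$, which is thus a completion of $H$; because $\beta S$ is compact and hence complete, the uniformly continuous map $\iota$ extends to a continuous map $\Phi\colon\SAP(H)\to\beta S$. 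A routine argument using the density of $H$ and the separate continuity of the operation on $\beta S$ shows that $\Phi$ is a homomorphism, and its image is dense, hence all of $\beta S$.

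Thus $\beta S$ is a continuous homomorphic image of the compact topological group $\SAP(H)$. Algebraically this already makes $\beta S$ a group; topologically, $\Phi$ induces a continuous bijective homomorphism $\SAP(H)/\ker\Phi\to\beta S$ of compact Hausdorff spaces, which is therefore a homeomorphism, so $\beta S$ inherits from the quotient group $\SAP(H)/\ker\Phi$ the structure of a compact topological group. (Alternatively one may invoke Ellis' theorem that a compact semitopological semigroup which is algebraically a group is automatically a topological group.) This completes the key step and, by the first paragraph, the proof.

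I expect the main obstacle to be exactly the extension step of the middle paragraph. The operation on $\beta S$ is only separately continuous, so one cannot simply pass to limits in products of two nets to check that limits of approximating nets from $H$ behave like group elements; the total boundedness of $H$, through the compactness of $\SAP(H)$, together with the uniform continuity coming from weak almost periodicity, is what circumvents this and yields the global homomorphism $\Phi$. Verifying the uniform continuity of $\iota$ --- that is, that weak almost periodicity of the restrictions forces the group uniformity of $H$ to be compatible with the compact uniformity of $\beta S$ --- is the technical heart of the argument.
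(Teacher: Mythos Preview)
Your argument is correct, and its overall shape coincides with the paper's: reduce everything to exhibiting $\beta S=\WAP(S)$ as a continuous homomorphic image of the compact topological group $\WAP(H)=\SAP(H)$, and then conclude. The difference lies entirely in how that surjective homomorphism $\Phi\colon\SAP(H)\to\beta S$ is produced.

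The paper obtains it in one line by functoriality: the inclusion $H\hookrightarrow S$ composed with $\eta\colon S\to\WAP(S)$ is a continuous homomorphism from $H$ into a compact semitopological semigroup, so the universal property of $\WAP(H)$ immediately yields a continuous homomorphism $h\colon\WAP(H)\to\WAP(S)$, whose image contains the dense set $H$ and is compact, hence equals $\WAP(S)=\beta S$. No uniform continuity, no completion argument, no separate-continuity bookkeeping is needed. Your route---showing that every $f\in C(\beta S)$ restricts to a WAP, hence left uniformly continuous, function on $H$, deducing that $\iota\colon H\to\beta S$ is uniformly continuous, extending to the completion $\SAP(H)$, and then verifying by a two-step density argument that the extension is multiplicative---is sound, but it reconstructs by hand precisely what the universal property of $\WAP(H)$ packages for free. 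What your approach buys is that it makes the analytic mechanism (weak almost periodicity $\Rightarrow$ uniform continuity) explicit; what the paper's approach buys is brevity and the avoidance of the ``technical heart'' you yourself flag as the main obstacle. Since Theorem~\ref{rup} already identifies $\WAP(H)$ with $\SAP(H)$, you may as well invoke the $\WAP$ universal property directly and skip the middle paragraph entirely.
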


\begin{proof}  The embedding $H\subset S$ induces a continuous homomorphism $h:\WAP(H)\to\WAP(S)$. We claim that this homomorphism is surjective. Indeed, 
 by Theorem~\ref{rup}, $\WAP(H)$ is a compact topological group, containing $H$ as a dense subgroup.  By Theorem~\ref{rez2}, the Stone-\v Cech compactification $\beta S$ of $S$ can be identified with the WAP-compactification $\WAP(S)$ of $S$. Then the image $h(\WAP(H))$ contains the dense subset $H$ of $\beta S=\WAP(S)$ and hence coincides with $\beta S$ being a compact dense subset of $\beta S$. The compact semitopological semigroup $\WAP(S)$, being a continuous homomorphic image of the compact topological group $\WAP(H)$, is a compact topological group. This implies that the canonic homomorphism $\WAP(S)\to\SAP(S)$ is a topological isomorphism. Consequently, the maps $\beta S\to\WAP(S)\to\AP(S)\to\SAP(P)$ all are homeomorphisms.
\end{proof}

This theorem implies another one of the same spirit.

\begin{theorem} If a topological semigroup $S$ contains a dense subgroup and has countably compact square $S\times S$, then the canonic maps
$\beta S\to\WAP(S)\to\AP(S)\to \SAP(S)$ are homeomorphisms.
\end{theorem}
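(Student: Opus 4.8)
The plan is to reduce everything to the single fact that, under the given hypotheses, the Stone-\v Cech compactification $\beta S$ is a compact topological \emph{group}. First I would note that the projection $\pr:S\times S\to S$ is a continuous surjection, so the countable compactness of $S\times S$ passes to $S$; in particular $S$ is pseudocompact, and $S\times S$, being countably compact, is pseudocompact as well. Applying the preceding theorem on topological semigroups with pseudocompact square, the multiplication of $S$ extends to a \emph{jointly continuous} associative operation on $\beta S$, and the canonic maps $\beta S\to\WAP(S)\to\AP(S)$ are already homeomorphisms. Thus it remains only to show that the compact topological semigroup $\beta S$ is a topological group: once this is known, $\AP(S)=\beta S$ lies in the class $\SAP$, and the universal property forces the remaining map $\AP(S)\to\SAP(S)$ to be a homeomorphism too.

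Let $H$ be the given dense subgroup of $S$ and $e$ its identity. The first substep is to see that $e$ is a two-sided identity of $\beta S$: the left translation $x\mapsto ex$ is continuous and coincides with the identity map on the dense set $H$, hence equals the identity on all of $\beta S$, and symmetrically on the right. The crucial substep is to produce inverses. Given $x\in\beta S$, choose by density a net $(h_\alpha)$ in $H$ with $h_\alpha\to x$; by compactness of $\beta S$ the net $(h_\alpha^{-1})$ has a subnet converging to some $y\in\beta S$, and along this subnet the joint continuity of the extended operation gives $xy=\lim h_\alpha h_\alpha^{-1}=e$ and $yx=\lim h_\alpha^{-1}h_\alpha=e$. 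Hence every element of $\beta S$ is invertible, so $\beta S$ is algebraically a group. Since a compact Hausdorff topological semigroup which is algebraically a group has a closed inversion graph $\{(x,y):xy=e\}$ and therefore continuous inversion, $\beta S$ is a compact topological group.

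With $\beta S$ identified as a compact topological group the conclusion follows in either of two ways: directly, since $\AP(S)=\beta S$ is already a compact topological group and so coincides with $\SAP(S)$; or through Theorem~\ref{totalbound}, observing that the subgroup $H$ of the compact topological group $\beta S$ is a totally bounded topological group which is dense in the pseudocompact semigroup $S$.

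I expect the main obstacle to be the inverse-producing step, and specifically the need for \emph{joint} rather than merely separate continuity of the extended operation: it is exactly joint continuity that allows passage to the limit in $h_\alpha h_\alpha^{-1}$, where both factors move at once. Reznichenko's Theorem~\ref{rez2} supplies only separate continuity and would not suffice here, which is precisely why the hypothesis of a countably compact (hence pseudocompact) square is essential. A minor point to check carefully is that the extended multiplication on $\beta S$ restricts on $H$ to the original group operation, so that the equalities $h_\alpha h_\alpha^{-1}=e$ may legitimately be used inside $\beta S$.
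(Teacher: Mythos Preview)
Your argument is correct and complete; the subtle points you flag (joint continuity for the two-variable limit, and that the extended operation on $\beta S$ restricts to the original one on $H$) are handled properly.

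The paper, however, takes a different route: rather than passing to $\beta S$ and building inverses there, it works inside $S$. It considers the maximal subgroup $H_e\subset S$ at the idempotent $e$ of the given dense subgroup $H$, and observes that $H_e$ is the projection onto the first factor of the closed set
\[
A=\{(x,y)\in S\times S:xy=yx=e,\ xe=ex=x,\ ye=ey=y\}.
\]
Countable compactness of $S\times S$ passes to the closed subset $A$ and then to its continuous image $H_e$; thus $H_e$ is a Tychonov countably compact paratopological group, which by Reznichenko's result \cite[2.7]{Rez} is a totally bounded topological group. Since $H_e\supset H$ is dense in the pseudocompact semigroup $S$, Theorem~\ref{totalbound} applies directly. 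Your approach is more self-contained --- it uses only the preceding ``pseudocompact square'' theorem plus elementary net/compactness reasoning, and avoids the external citation on countably compact paratopological groups. The paper's approach is shorter on the page because it never needs to verify that $\beta S$ is a group by hand: all of that is absorbed into Theorem~\ref{totalbound}, at the price of importing a nontrivial structural fact from \cite{Rez}. Your closing remark (deducing total boundedness of $H$ from its being a subgroup of the compact group $\beta S$, then invoking Theorem~\ref{totalbound}) is the paper's conclusion reached by a detour; the paper gets there without ever leaving $S$.
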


\begin{proof} Let $H$ be a dense subgroup of $S$ and let $e$ be the idempotent of $H$.
Let $$H_e=\{x\in S:\exists x^{-1}\in S\;\mbox{with}\;xx^{-1}=x^{-1}x=e,\; xe=ex=x,\;x^{-1}e=ex^{-1}=x^{-1}\}$$be the maximal subgroup of $S$ containing the idempotent $e$. Our theorem will follow from Theorem~\ref{totalbound} as soon as we check that $H_e$ is a totally bounded topological group. For this observe that $H_e$ coincides with the projection of the closed subset $$A=\{(x,y)\in S\times S:xy=yx=e,\; xe=ex=x,\;ye=ey=y\}$$ of $S\times  S$ onto the first factor. The countable compactness of $S\times S$ implies that of $A$ and of its projection $H_e$. The paratopological group $H_e$, being a Tychonov countably compact paratopological group, is a totally bounded topological group according to \cite[2.7]{Rez}.
\end{proof}

Our final result concerns the $\AP$-compactifications of pseudocompact openly factorizable topological semigroups. Those are pseudocompact topological semigroups whose topological spaces are {\em openly factorizable}.

We define a topological space $X$ to be {\em openly factorizable} if for each continuous map $f:X\to Y$ to a second countable space $Y$ there are a continuous open map $p:X\to Z$ onto a second countable space $Z$ and a continuous map $g:Z\to Y$ such that $f=g\circ p$. Openly factorizable spaces will be studied in details in the next two sections. Now we present our main extension result for which we need the notion of a weakly Lindel\"of space.

We call a topological space $X$  {\em weakly Lindel\"of\/} if each open cover $\U$ of $X$ contains a countable subcollection $\V\subset\U$ whose union $\cup\V$ is dense in $X$. It is clear that the class of weakly Lindel\"of spaces includes all Lindel\"of spaces and all countably cellular (in particular, all separable) spaces. 

\begin{theorem}\label{main} For any openly factorizable topological semigroup $S$ having weakly Lindel\"of square $S\times S$, the semigroup operation $S\times S\to S$  extends to a continuous semigroup operation $\upsilon S\times\upsilon X\to\upsilon S$ defined on the Hewitt completion $\upsilon S$ of $S$.
\end{theorem}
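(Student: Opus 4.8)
The plan is to reduce the problem to the identity $\upsilon(S\times S)=\upsilon S\times\upsilon S$ and then transport the operation across it. First I would observe that $\upsilon S\times\upsilon S$ is real complete, being a finite product of real complete spaces, and that it contains $S\times S$ as a dense subspace. By the characterization of the Hewitt completion as the unique real complete extension in which the space is densely $C$-embedded (see \cite[\S3.11]{En}), to prove the identity $\upsilon(S\times S)=\upsilon S\times\upsilon S$ it suffices to show that $S\times S$ is $C$-embedded in $\upsilon S\times\upsilon S$, that is, every continuous function $f:S\times S\to\IR$ extends to a continuous function $\bar f:\upsilon S\times\upsilon S\to\IR$.

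Granting this identity, the extension of the operation is routine. The map $\mu:S\times S\to S\subset\upsilon S$ takes values in the real complete space $\upsilon S$, so by the universal property of the Hewitt completion it extends to a continuous map $\bar\mu:\upsilon(S\times S)=\upsilon S\times\upsilon S\to\upsilon S$. Associativity of $\bar\mu$ follows by a density argument that needs no further product identity: both maps $(x,y,z)\mapsto\bar\mu(\bar\mu(x,y),z)$ and $(x,y,z)\mapsto\bar\mu(x,\bar\mu(y,z))$ are continuous on $\upsilon S\times\upsilon S\times\upsilon S$, being composites of $\bar\mu$ with continuous maps, and they agree on the dense subset $S\times S\times S$, hence everywhere since $\upsilon S$ is Hausdorff. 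Thus $\bar\mu$ is the desired continuous associative extension.

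Everything therefore rests on the $C$-embedding step, which is where both hypotheses are spent. Here I would invoke the spectral characterization of openly factorizable spaces from Section~\ref{s3}, representing $S$ as the limit of a factorizing $\w$-spectrum $\{S_\alpha,\pi^\beta_\alpha\}$ of second countable spaces with open limit projections $\pi_\alpha:S\to S_\alpha$. Since products of open maps are open and products commute with inverse limits, $S\times S$ is then the limit of the product spectrum $\{S_\alpha\times S_\alpha\}$ with open projections $\pi_\alpha\times\pi_\alpha$ onto the second countable spaces $S_\alpha\times S_\alpha$. The goal is a \emph{rectangular} factorization of the given $f:S\times S\to\IR$ through a single projection, namely $f=h\circ(\pi_\alpha\times\pi_\alpha)$ for some index $\alpha$ and some continuous $h:S_\alpha\times S_\alpha\to\IR$. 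Once this is achieved, the second countable (hence Lindel\"of, hence real complete) space $S_\alpha$ satisfies $\upsilon S_\alpha=S_\alpha$, so each $\pi_\alpha$ extends to $\upsilon\pi_\alpha:\upsilon S\to S_\alpha$, and $\bar f:=h\circ(\upsilon\pi_\alpha\times\upsilon\pi_\alpha)$ is a continuous extension of $f$ over $\upsilon S\times\upsilon S$, as required.

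The heart of the matter, and the step I expect to be the main obstacle, is thus the rectangular factorization. It splits into showing that $f$ is constant on the fibers of $\pi_\alpha\times\pi_\alpha$ for a suitable $\alpha$, and then descending $f$ along this projection. The second part is automatic: an open surjection is a quotient map, so the openness of $\pi_\alpha\times\pi_\alpha$ guarantees that a function constant on its fibers descends to a continuous $h$. The first part is exactly where the weak Lindel\"of property of $S\times S$ is essential: a $(\pi_\alpha\times\pi_\alpha)$-saturated open set is a union of open rectangles $A\times B$, a priori uncountably many, and weak Lindel\"ofness forces a countable-chain-type condition on such families, allowing one to replace the union by countably many rectangles up to a dense set and then to absorb all the resulting countably many coordinates into a single index $\alpha$ by the $\w$-completeness of the indexing set. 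In other words, the product spectrum of a factorizing spectrum is again factorizing once the square is weakly Lindel\"of; this is the only place where the hypothesis on $S\times S$ is used, and it is the technical core of the argument.
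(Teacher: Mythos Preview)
Your proposal is correct and follows essentially the same route as the paper: the paper packages the extension step into a separate result (Theorem~\ref{product}) which is proved exactly as you outline---represent $S$ via the spectral characterization (Theorem~\ref{spectral}), form the product $\omega$-spectrum, and invoke the weak Lindel\"of hypothesis through Proposition~\ref{factor} to obtain the rectangular factorization $f=f_{(\alpha,\beta)}\circ(\pi_\alpha\times p_\beta)$, after which the extension to $\upsilon S\times\upsilon S$ and the density argument for associativity are immediate. The only cosmetic difference is that you index the product spectrum by the diagonal $\{(\alpha,\alpha)\}$ rather than the full $A\times A$, and you phrase the goal as $C$-embedding of $S\times S$ in $\upsilon S\times\upsilon S$ rather than directly as an extension theorem; neither changes the substance.
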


\begin{proof} By Theorem~\ref{product} below the semigroup operation $\mu:S\times S\to S$ extends to a continuous map $\bar\mu:\upsilon S\times\upsilon S\to\upsilon S$ thought as a continuous binary operation on $\upsilon S$. This operation is associative on $S$ and by the continuity remains associative on $\upsilon S$.
\end{proof} 

This theorem implies another one:

\begin{theorem} For each pseudocompact openly factorizable topological semigroup $S$ with weakly Lindel\"of square the canonic maps $\beta S\to\WAP(S)\to\AP(S)$ are homeomorphisms.
\end{theorem}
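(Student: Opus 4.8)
The plan is to deduce this statement from Theorems~\ref{rez2} and~\ref{main} by exploiting the pseudocompactness of $S$. First I would observe that, $S$ being pseudocompact, its Hewitt completion $\upsilon S$ coincides with its Stone-\v Cech compactification $\beta S$. Since $S$ is openly factorizable and has weakly Lindel\"of square, Theorem~\ref{main} applies and extends the semigroup operation $\mu:S\times S\to S$ to a jointly continuous associative operation $\bar\mu$ on $\upsilon S=\beta S$. Consequently $\beta S$, equipped with $\bar\mu$, is a compact topological semigroup containing $S$ as a dense subsemigroup.

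Next, by Reznichenko's Theorem~\ref{rez2} the canonic map $\beta\eta:\beta S\to\WAP(S)$ is already a homeomorphism, so it remains only to identify $\AP(S)$ with $\beta S$. For this I would run the standard universal-property argument. Because $\beta S$ is now a compact topological semigroup and the inclusion $\iota:S\to\beta S$ is a continuous homomorphism, the universal property of the $\AP$-compactification yields a unique continuous homomorphism $r:\AP(S)\to\beta S$ with $\iota=r\circ\eta$. Conversely, since $\beta S$ is the Stone-\v Cech compactification of $S$, the canonic homomorphism $\eta:S\to\AP(S)$ extends to a continuous map $\beta\eta:\beta S\to\AP(S)$. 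The composites $r\circ\beta\eta:\beta S\to\beta S$ and $\beta\eta\circ r:\AP(S)\to\AP(S)$ restrict to the identity on the dense subsets $S$ and $\eta(S)$ respectively, hence coincide with the identity maps everywhere by continuity and Hausdorffness. Therefore $\beta\eta:\beta S\to\AP(S)$ is a homeomorphism. Finally, since the canonic map $\beta S\to\WAP(S)$ and the composite $\beta S\to\WAP(S)\to\AP(S)$ are both homeomorphisms, the intermediate map $\WAP(S)\to\AP(S)$ is a homeomorphism as well, which gives the claim.

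I do not expect a serious obstacle in this particular argument: the genuine analytic content—the joint continuity of the extended operation—is supplied entirely by Theorem~\ref{main}, and the associativity of $\bar\mu$ is automatic, being a closed condition that holds on the dense subsemigroup $S\times S\times S$ and is preserved under the continuous extension. The only care needed is to keep track of which of the two equivalent descriptions of $\upsilon S=\beta S$ is being used and to verify that $\eta(S)$ is dense in $\AP(S)$, which holds for any $\C$-compactification. Beyond these routine checks, the remaining steps are purely formal manipulations with universal properties.
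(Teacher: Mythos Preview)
Your proposal is correct and follows essentially the same route as the paper: apply Theorem~\ref{main} to make $\upsilon S=\beta S$ a compact topological semigroup, and then use the universal property of $\AP(S)$ to identify it with $\beta S$. The paper states the last step tersely (``$\beta\eta:\beta S\to\AP(S)$ has a continuous inverse''), whereas you spell out the two-sided inverse argument explicitly; your separate appeal to Theorem~\ref{rez2} for the $\WAP$-part is harmless but unnecessary, since the same universal-property argument handles $\WAP(S)$ once $\beta S$ is known to be a compact topological semigroup.
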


\begin{proof} By Theorem~\ref{main}, the semigroup operation $\mu:S\times S\to S$  extends to a continuous semigroup operation $\bar \mu:\upsilon S\times\upsilon S\to\upsilon S$ turning the Hewitt completion $\upsilon S$ of $S$ into a topological semigroup that contains $S$ as a dense subsemigroup. Since the space $S$ is pseudocompact, its Hewitt completion coincides with its Stone-\v Cech compactification $\beta S$ \cite[\S3.11]{En}. Consequently, $\beta S$ is a compact topological semigroup, which implies that the canonic map $\beta\eta:\beta S\to\AP(S)$ has a continuous inverse and consequently, the maps $$\beta S\to\WAP(S)\to\AP(S)$$ are homeomorphisms.
\end{proof}

\section{Some elementary properties of openly factorizable spaces}\label{s2}

In this section we establish some elementary properties of openly factorizable spaces. 
First we prove a helpful lemma.

\begin{lemma}\label{l1} Let $p:X\to Z$ be a map from a Tychonov space to a second countable space and let $\upsilon p:\upsilon X\to Z$ be its continuous extension to the Hewitt completion of $X$. The map $\upsilon p$ is surjective (open) if and only if so is the map $p$.
\end{lemma}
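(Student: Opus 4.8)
The plan is to reduce both assertions to a single ``master identity'': for every open set $O\subseteq\upsilon X$ one has $\upsilon p(O)=p(O\cap X)$. Granting this, everything follows at once. Taking $O=\upsilon X$ gives $\upsilon p(\upsilon X)=p(X)$, so $\upsilon p$ is surjective precisely when $p$ is. For the openness statement, if $p$ is open then for any open $O\subseteq\upsilon X$ the trace $O\cap X$ is open in $X$, hence $\upsilon p(O)=p(O\cap X)$ is open in $Z$ and $\upsilon p$ is open; conversely, if $\upsilon p$ is open then an arbitrary open $U\subseteq X$ can be written as $U=O\cap X$ for some open $O\subseteq\upsilon X$ (since $X$ carries the subspace topology of $\upsilon X$), whence $p(U)=p(O\cap X)=\upsilon p(O)$ is open and $p$ is open.

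The inclusion $\upsilon p(O)\supseteq p(O\cap X)$ is immediate, since $O\cap X\subseteq O$ and $\upsilon p$ extends $p$. The content lies in the reverse inclusion, and this is where I would invoke the $G_\delta$-description of the Hewitt completion recalled in the introduction. I would fix $x\in O$ and set $z=\upsilon p(x)$. Because $Z$ is second countable (hence metrizable and $T_1$), the singleton $\{z\}$ is a $G_\delta$-set, say $\{z\}=\bigcap_{n\in\w}V_n$ with each $V_n$ open in $Z$. The sets $(\upsilon p)^{-1}(V_n)\cap O$ are open in $\upsilon X$, so, $\upsilon X$ being a subspace of $\beta X$, they have the form $W_n\cap\upsilon X$ for suitable open sets $W_n\subseteq\beta X$. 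Then $G=\bigcap_{n\in\w}W_n$ is a $G_\delta$-set in $\beta X$ containing the point $x$.

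Now comes the key step. Since $x\in\upsilon X$, the characterization of $\upsilon X$ as the set of points meeting every $G_\delta$-set through them yields a point $x'\in G\cap X$. For every $n$ one has $x'\in W_n\cap X\subseteq W_n\cap\upsilon X=(\upsilon p)^{-1}(V_n)\cap O$, so $x'\in O$ and $\upsilon p(x')\in V_n$; consequently $p(x')=\upsilon p(x')\in\bigcap_{n\in\w}V_n=\{z\}$, that is $p(x')=z$. As $x'\in O\cap X$, this gives $z\in p(O\cap X)$ and establishes $\upsilon p(O)\subseteq p(O\cap X)$, completing the master identity.

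The main obstacle is exactly this reverse inclusion: one must convert the abstract point $x\in\upsilon X\subseteq\beta X$ with $\upsilon p(x)=z$ into a genuine point of $X$ lying over $z$ and inside $O$. The device that makes this work is that singletons of the second countable space $Z$ are $G_\delta$, so the intersection of their $\upsilon p$-preimages with $O$ produces a $G_\delta$-set of $\beta X$ through $x$, on which the defining property of the Hewitt completion forces a point of $X$. Beyond this, I expect only routine bookkeeping with the subspace topology — passing between open subsets of $\upsilon X$ and their extensions $W_n$ in $\beta X$, and the elementary fact that every open $U\subseteq X$ is the trace of an open subset of $\upsilon X$.
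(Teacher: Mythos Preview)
Your proof is correct. Both you and the paper ultimately rely on the identity $\upsilon p(O)=p(O\cap X)$ for open $O\subseteq\upsilon X$, but you arrive at it differently and exploit it more fully. The paper fixes a metric on $Z$ and argues by contradiction with auxiliary distance functions: if some $y\in\upsilon p(O)\setminus p(O\cap X)$ existed, then $f(x)=g(x)+\dist(\upsilon p(x),y)$ (with $g$ a Urysohn function supported in $O$) would be strictly positive on $X$ yet vanish at a point of $\upsilon X$, contradicting the unique extension property of the Hewitt completion for maps into $(0,\infty)$. You instead use the $G_\delta$-description of $\upsilon X$ inside $\beta X$ directly, which avoids any metric on $Z$ and any auxiliary functions. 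Your organization is also tighter: you prove the master identity once and read off both the surjectivity and the openness equivalences from it, whereas the paper handles surjectivity separately and, in the direction ``$\upsilon p$ open $\Rightarrow$ $p$ open'', reproves an inclusion of the same type rather than quoting the identity it has already established. The paper's function-theoretic argument, on the other hand, stays entirely within $\upsilon X$ and does not need to pass through $\beta X$.
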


\begin{proof} Endow the second countable space $Z$ with a metric generating the topology of $Z$.

If the map $p$ is surjective, then $\upsilon p$ is surjective too because $$Z=p(X)\subset\upsilon p(\upsilon X)\subset Z.$$ Now assume conversely that the map $\upsilon p$ is surjective but $p$ is not. Then  
we can find a point $z_0\in Z\setminus p(X)$ and consider the continuous function $f:\upsilon X\to [0,+\infty)$, $f:x\mapsto\dist(p(x),z_0)$. It follows from $z_0\notin p(X)$ that $f(X)\subset(0,+\infty)$. The function $f|X:X\to(0,+\infty)$ has a unique continuous extension $\bar f:\upsilon  X\to (0,\infty)$. Since $f$ also extends $f|X$, we get $\bar f=f$ and hence $f(\upsilon X)=\bar  f(\upsilon X)\subset(0,\infty)$ which is not possible because $f(x_0)=0$ for any point $x_0\in p^{-1}(z_0)$. Hence the map $p|X:X\to Z$ is surjective.
\smallskip

Now assume that the map $p$ is open. To show that the map $\upsilon p$ is open, take  any open subset $U\subset \upsilon X$. We claim that $\upsilon p(U)=p(U\cap X)$. In the opposite case, we can find a point $y\in \upsilon p(U)\setminus p(U\cap X)$. Choose any point $x_0\in U$ with $\upsilon p(x_0)=y$ and find a  continuous function $g:\upsilon X\to[0,1]$ such that $g^{-1}(0)$ is a neighborhood of $x_0$ and $g^{-1}[0,1)\subset U$. Consider the continuous function $f:\upsilon X\to [0,\infty)$ defined by $f(x)=g(x)+\dist(\upsilon p(x),y)$ and note that $f(x_0)=0$ while $f(x)\in (0,1]$ for all $x\in X$. Indeed, if $x\in X\cap U$, then $f(x)\ge\dist(p(x),y)>0$ because $y\notin p(U\cap X)$. If $x\in X\setminus U$, then $f(x)\ge g(x)=1>0$.
Since $\upsilon X$ is a Hewitt completion of $X$, the function $f|X:X\to(0,+\infty)$ admits a unique continuous extension $\bar f:\upsilon X\to(0,+\infty)$. Since $X$ is dense in $\upsilon X$, we get $f=\bar f$ and thus $0=f(x_0)=\bar f(x_0)\in(0,\infty)$. This is a contradiction showing that the set $\upsilon p(U)=p(U\cap X)$ is open and hence the map $\upsilon p$ is open. 
\smallskip

Now assume that the map $\upsilon p$ is open. To show that $p$ is open, fix any non-empty open set $U\subset X$ and find an open set $V\subset\upsilon X$ such that $U=V\cap X$. To prove that the image $p(U)$ is open, take any point $y_0\in p(U)$ and find a point  $x_0\in U$ with $p(x_0)=y_0$. Since the space $\upsilon X$ is Tychonov, there is a continuous function $f:\upsilon X\to[0,1]$ such that $W=f^{-1}(0)$ is a neighborhood of $x_0$ in $\upsilon X$ while $f^{-1}[0,1)\subset V$. Since the map $\upsilon p$ is open, the image $\upsilon p(W)$ is an open neighborhood of $y_0$ in $Z$. We claim that $\upsilon p(W)\subset p(V\cap X)=p(U)$. Assume conversely that there is a point $y\in \upsilon p(W)\setminus p(U)$ and consider the continuous function 
$$g:\upsilon X\to[0,\infty),\;\; g(x)\mapsto f(x)+\dist(p(x),y).$$
It follows that $g(X)\subset(0,\infty)$ and hence $g(\upsilon X)\subset(0,\infty)$ too. On the other hand, for any point $x\in W$ with $p(x)=y$ we get $g(x)=0$, which is a contradiction showing that $p$ is open.
\end{proof}

\begin{proposition}\label{hfactor}
The Hewitt completion $\upsilon X$ of a Tychonov space $X$ is openly factorizable if and only if so is the space $X$.
\end{proposition}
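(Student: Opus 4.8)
The plan is to move factorizations back and forth between $X$ and $\upsilon X$ using Lemma~\ref{l1}, together with the observation that every second countable Tychonov space $Z$ is real complete. Indeed, such a $Z$ is separable metrizable, hence Lindel\"of, hence realcompact; consequently every continuous map from $X$ (or from $\upsilon X$) into a second countable space extends continuously to $\upsilon X$, and any two continuous maps into a second countable (in particular Hausdorff) space that agree on the dense subset $X\subset\upsilon X$ must coincide. These two facts are what let me pass freely between a map and its Hewitt extension.

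For the direction ``$X$ openly factorizable $\Ra$ $\upsilon X$ openly factorizable'', I would start with a continuous map $F:\upsilon X\to Y$ into a second countable space and restrict it to $f=F|X$. Applying open factorizability of $X$ gives an open surjection $p:X\to Z$ onto a second countable space $Z$ and a continuous map $g:Z\to Y$ with $f=g\circ p$. Since $Z$ is real complete, $p$ extends to a continuous map $\upsilon p:\upsilon X\to Z$, and Lemma~\ref{l1} guarantees that $\upsilon p$ is again open and surjective. Finally $F$ and $g\circ\upsilon p$ are continuous maps into $Y$ agreeing on the dense set $X$, hence coincide; thus $F=g\circ\upsilon p$ is the required factorization through the open map $\upsilon p$.

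For the converse, I would take a continuous $f:X\to Y$ into a second countable $Y$; as $Y$ is real complete, $f$ extends to $\upsilon f:\upsilon X\to Y$. Open factorizability of $\upsilon X$ then yields an open surjection $P:\upsilon X\to Z$ onto a second countable $Z$ and a continuous map $G:Z\to Y$ with $\upsilon f=G\circ P$. Setting $p=P|X$, the map $P$ is a continuous extension of $p$ to $\upsilon X$ and, by uniqueness of such extensions, equals $\upsilon p$; Lemma~\ref{l1} then transfers the openness and surjectivity of $P=\upsilon p$ down to $p$. Restricting $\upsilon f=G\circ P$ to $X$ gives $f=G\circ p$, the desired factorization of $f$ through the open surjection $p:X\to Z$.

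The step I expect to be most delicate is the bookkeeping that all the relevant targets really are real complete and that the extensions exist and are unique, since this is exactly what legitimizes identifying $P$ with $\upsilon p$ in the converse direction so that Lemma~\ref{l1} applies. Once that identification is in place, Lemma~\ref{l1} carries the real content of transporting openness and surjectivity between $X$ and $\upsilon X$, while the density argument handles the equality of the factorizing maps.
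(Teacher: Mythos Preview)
Your proposal is correct and follows essentially the same route as the paper's own proof: in each direction one uses real completeness of second countable spaces to extend maps between $X$ and $\upsilon X$, and then Lemma~\ref{l1} to transport openness and surjectivity of the factoring map. If anything, your write-up is slightly more explicit than the paper's in justifying $F=g\circ\upsilon p$ via density and in identifying $P$ with $\upsilon(P|X)$ before invoking Lemma~\ref{l1}.
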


\begin{proof} Assume that a Tychonov space $X$ is openly factorizable. To show that the Hewitt completion $\upsilon X$ is openly factorizable, take any continuous map $f:\upsilon X\to Y$ to a second countable space $Y$. Since $X$ is openly factorizable, there are an open surjective continuous map $p:X\to Z$ to a second countable space $Z$ and a continuous map $g:Z\to Y$ such that $f|X=g\circ p$. The space $Z$, being second countable, is real complete \cite[3.11.12]{En}. Consequently, the map $p$ admits a continuous extension $\upsilon p:\upsilon X\to Z$. It follows that $f=g\circ\upsilon p$.
By Lemma~\ref{l1}, the map $\upsilon p$ is open and surjective, witnessing that $\upsilon X$ is openly factorizable.

Now assume that $\upsilon X$ is openly factorizable. To show that $X$ is openly factorizable, take any continuous function $f:X\to Y$ to a second countable space $Y$. Since $Y$ is real complete \cite[3.11.12]{En},
the map $f$ extends to a continuous map $\upsilon f:\upsilon X\to Y$. Since $\upsilon X$ is openly factorizable, there are an open surjective continuous map $p:\upsilon X\to Z$ to a second countable space $Z$ and a continuous map $g:Z\to Y$ such that $f=g\circ p$. 
Then $f|X=g\circ p|X$ and the map $p|X:X\to Z$ is open and surjective by Lemma~\ref{l1}. 
\end{proof}

\begin{proposition}\label{bfactor} The Stone-\v Cech compactification $\beta X$ of a Tychonov space $X$ is openly factorizable if and only if $X$ is pseudocompact and openly factorizable.
\end{proposition}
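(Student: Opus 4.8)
The plan is to derive both implications from Proposition~\ref{hfactor} together with the identity $\upsilon X=\beta X$ characterizing pseudocompactness, the only genuinely new ingredient being a lemma to the effect that an open continuous surjection of $\beta X$ onto a second countable space cannot omit points coming from the dense set $X$. For the easy implication, suppose $X$ is pseudocompact and openly factorizable. Pseudocompactness gives $\upsilon X=\beta X$, and by Proposition~\ref{hfactor} the open factorizability of $X$ passes to $\upsilon X$, so $\beta X=\upsilon X$ is openly factorizable. This direction is essentially a one-line consequence of what is already available.

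For the converse I would assume $\beta X$ openly factorizable and first note that it suffices to prove $X$ pseudocompact: once this is known, $\upsilon X=\beta X$ is openly factorizable and Proposition~\ref{hfactor} immediately yields the open factorizability of $X$. To prove pseudocompactness I argue by contradiction. If $X$ is not pseudocompact, then $\upsilon X\ne\beta X$, so there is a nonempty zero-set $F=\phi^{-1}(0)$ of a continuous function $\phi:\beta X\to[0,1]$ with $F\cap X=\emptyset$, i.e. $\phi>0$ on the dense set $X$. Applying open factorizability to $\phi$ gives a factorization $\phi=g\circ p$ through an open continuous surjection $p:\beta X\to Z$ onto a second countable space $Z$ and a continuous $g:Z\to[0,1]$. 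Then $A=g^{-1}(0)$ is a nonempty closed set with $p^{-1}(A)=F$, and since $\phi>0$ on $X$ we get $p(X)\cap A=\emptyset$. The contradiction will follow from the lemma below, which forces $p(X)=Z\supseteq A\ne\emptyset$, whence $A=p(X)\cap A=\emptyset$.

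The key step --- and the main obstacle --- is the claim that for every open continuous surjection $p:\beta X\to Z$ onto a second countable space $Z$ one has $p(X)=Z$. Here I would fix a hypothetical $z_0\in Z\setminus p(X)$; it cannot be isolated, since otherwise $p^{-1}(z_0)$ would be a nonempty open set meeting the dense set $X$. Choosing a sequence $y_m\to z_0$ with $y_m\ne z_0$, the set $\{y_m\}$ is closed and discrete in the metrizable space $Z\setminus\{z_0\}$, so Tietze's theorem produces a continuous $h:Z\setminus\{z_0\}\to[-1,1]$ with $h(y_m)=q_m$, where $\{q_m\}$ is chosen so that every tail $\{q_m:m\ge M\}$ is dense in $[-1,1]$; consequently every punctured neighbourhood of $z_0$ has dense $h$-image. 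Since $p(X)\subseteq Z\setminus\{z_0\}$, the bounded function $f=h\circ p|X:X\to[-1,1]$ extends to $\bar f:\beta X\to[-1,1]$. I would then derive a contradiction at any $x_0\in p^{-1}(z_0)$: for an arbitrary neighbourhood $U$ of $x_0$, openness of $p$ makes $p(U)$ a neighbourhood of $z_0$, density of $X$ gives that $p(U\cap X)$ is dense in $p(U)$, and continuity of $h$ makes $\bar f(U)\supseteq h(p(U\cap X))$ dense in $[-1,1]$. Thus $\bar f(U)$ is dense in $[-1,1]$ for \emph{every} neighbourhood $U$ of $x_0$, contradicting continuity of $\bar f$ at $x_0$. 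Hence $p(X)=Z$.

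I expect the construction of the oscillating function $h$ and the verification that the Stone-\v Cech extension $\bar f$ has dense image on every neighbourhood of a point of the fibre $p^{-1}(z_0)$ to be the delicate part; everything else is bookkeeping with the Hewitt completion and Proposition~\ref{hfactor}. It is worth noting that the argument uses the defining extension property of $\beta X$ in an essential way --- the lemma is special to Stone-\v Cech compactifications and would fail for an arbitrary compactification --- which is exactly what one should expect, given that the conclusion $p(X)=Z$ clashes with the relation $p(\upsilon X)\cap A=\emptyset$ forced by $F\cap\upsilon X=\emptyset$ precisely in the non-pseudocompact case.
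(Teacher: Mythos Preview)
Your argument is correct, and the two directions are handled exactly as in the paper up to the point where pseudocompactness of $X$ must be shown. From there the routes diverge.

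The paper argues by producing a concrete witness against the openness of the factoring map. Starting from an unbounded $f:X\to[0,\infty)$, it extends to $\beta f:\beta X\to[0,\infty]$, factors $\beta f=g\circ p$, picks $x_n\in X$ with $f(x_n)\nearrow\infty$ and $p(x_n)\to z_\infty$, and then builds a discrete family of neighbourhoods $W_n\subset X$ of the $x_n$. A bump function supported on $\bigcup_n W_{2n}$ is pushed to $\beta X$, and a neighbourhood $W$ of an accumulation point of $\{x_{2n}\}$ is shown to have $p(W)$ trapped in $\overline{\bigcup_n V_{2n}}$, which avoids the odd $V_{2n+1}$'s converging to $z_\infty$; hence $p(W)$ is not open. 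So the paper kills openness of $p$ directly.

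You instead isolate a clean general lemma: every open continuous surjection $p:\beta X\to Z$ onto a second countable space already satisfies $p(X)=Z$. Your proof keeps $p$ open and shifts the contradiction to continuity of a Stone--\v Cech extension, via an oscillating Tietze function $h$ on $Z\setminus\{z_0\}$. This is in the same spirit as the paper's Lemma~\ref{l1} (which handles $\upsilon X$ with a distance function), but is a genuinely stronger statement about $\beta X$, and the oscillation trick is the right replacement for the distance argument there. The payoff is a reusable fact---for instance, it shows at once that $\beta\omega$ admits no open map onto an infinite metrizable compactum---and the deduction of pseudocompactness from it is a two-line computation with the zero-set $F$. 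The paper's approach, by contrast, is self-contained at the level of the specific $\phi$ arising from open factorizability and does not pass through a separate lemma; it is more hands-on but less modular.

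One small point worth making explicit in your write-up: when you say ``continuity of $h$ makes $h(p(U\cap X))$ dense in $[-1,1]$'', the chain is that $p(U\cap X)\subset p(U)\setminus\{z_0\}$ is dense there (since $p(U\cap X)$ is dense in $p(U)$ and $\{z_0\}$ is closed), so by continuity $h(p(U\cap X))$ is dense in $h(p(U)\setminus\{z_0\})$, which in turn contains a tail of $\{q_m\}$ and hence is dense in $[-1,1]$. This is implicit in what you wrote but deserves one extra clause.
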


\begin{proof} If $X$ is pseudocompact and openly factorizable, then the Hewitt completion $\upsilon X$ is openly factorizable by Proposition~\ref{hfactor}. Since $X$ is pseudocompact, its Hewitt completion coincides with the Stone-\v Cech compactification $\beta X$. So, $\beta X$ is openly factorizable.

Now assume conversely that $\beta X$ is openly factorizable. We claim that $X$ is pseudocompact. If the opposite case, we could find a continuous unbounded function $f:X\to[0,\infty)$. Let $\beta f:\beta X\to [0,\infty]$ be the Stone-\v Cech extension of the map $f$ to the one-point compactification of the half-line $[0,\infty)$.  Since $\beta X$ is openly factorizable, there are a continuous open surjective map $p:\beta X\to Z$ onto a metrizable compact space $Z$ and a continuous map $g:Z\to [0,\infty]$ such that $f=g\circ p$. 

Since the function $f$ is unbounded, we can choose a sequence $\{x_n\}_{n\in\w}\subset X$ such that the sequence $\{f(x_n)\}_{n\in\w}\subset[0,\infty)$ is strictly increasing and unbounded. Passing to a subsequence, if necessary, we can assume that the sequence $\{p(x_n)\}_{n\in\w}\subset Z$ converges to some point $z_\infty\in Z$. It follows from $f=g\circ p$ that $g(z_\infty)=\infty$ and the points $z_\infty$, $p(x_n)$, $n\in\w$, all are distinct. So each point $p(x_n)$ has a neighborhood $U_n\subset Z\setminus\{z_\infty\}$ such that the family $\{U_n:n\in\w\}$ is disjoint. Moreover, we can assume that the sequence $(U_n)$ converges to $z_\infty$ in the sense that each neighborhood $O(z_\infty)$ contains all but finitely many sets $U_n$. Since the sequence $\{f(x_n)\}_{n\in\w}$ is closed and discrete in $[0,\infty)$, to each point $f(x_n)$ we can assign an open neighborhood $V_n\subset [0,\infty)$ such that the family $\{V_n:n\in\w\}$ is discrete in $[0,\infty)$ (in the sense that each point  has a neighborhood that meets at most one set $V_n$). Now for every $n\in\w$ consider the open neighborhood $W_n=f^{-1}(V_n)\cap p^{-1}(U_n)$ of the point $x_n$ in $X$. Since the family $\{V_n\}_{n\in\w}$ is discrete in $[0,\infty)$, the family $\{W_n\}_{n\in\w}$ is discrete in $X$. Let $x_\infty\in\beta X$ be any accumulation point of the sequence $\{x_{2n}\}_{n\in\w}$.

Since the space $X$ is Tychonov and $\{W_{2n}\}_{n\in\w}$ is discrete, we can construct a continuous function $\varphi:X\to[0,1]$ such that 
$$\{x_{2n}\}_{n\in\w}\subset \varphi^{-1}(1)\subset \varphi^{-1}(0,1]\subset\bigcup_{n\in\w}W_{2n}.$$ 
Let $\beta\varphi :\beta X\to [0,1]$ be the Stone-\v Cech extension of $\varphi$. It follows from the continuity of $\beta\varphi$ that $\beta\varphi(x_\infty)=1$. Then the set $W=(\beta\varphi)^{-1}(\frac12,1]$ is an open neighborhood of $x_\infty$ in $\beta X$ with $$W\cap X\subset\overline{W}\cap X\subset \varphi^{-1}[1/2,1]\subset \bigcup_{n\in\w}W_{2n}.$$ It follows that $p(W\cap X)\subset V$ where $V=\bigcup_{n\in\w}V_{2n}$ and consequently,
$$p(W)\subset p(\overline{W})=p(\overline{W\cap X})\subset\overline{p(W\cap X)}\subset \overline{V}.$$ Since $\overline{V}\subset X\setminus \bigcup_{n\in\w}V_{2n+1}$ and $V_{2n+1}\to z_\infty$, the set $\overline{V}$ contains no neighborhood of the point $z_\infty=p(x_\infty)$. Consequently, the set $p(W)$ cannot be open. This contradiction completes the proof of the pseudocompactness of $X$.

In this case the Stone-\v Cech compactification $\beta X$ coincides with the Hewitt completion $\upsilon X$ of $X$. Applying Proposition~\ref{hfactor}, we conclude that $X$ is openly factorizable.
\end{proof}

\section{Spectral characterization of openly factorizable spaces}\label{s3}

In this section we shall present a spectral characterization of openly factorizable topological spaces. First we remind some information related to inverse spectra, see
\cite[\S3.1]{FC} and \cite[\S2.5]{En}. 

A partially ordered set $(A,\le)$ is called 
\begin{itemize}
\item {\em directed\/} if for every $a,b\in A$ there exists $c\in A$
with $c\ge a$, $c\ge b$; 
\item {\em $\w$-directed} if for any countable subset $C\subset A$ has an upper bound in $A$ (which a point $a\in A$ such that $a\ge c$ for every $c\in C$);
\item {\em $\omega$-complete\/} if $A$ each countable subset $C\subset A$ has the smallest upper bound $\sup C$ in $A$. 
\end{itemize}
For example, the ordinal $\omega_1$ endowed with the natural order is a well-ordered $\omega$-complete set.

By a spectrum over a directed set $(A,\le)$ we understand a
collection ${\mathcal S}=\{X_\alpha,\pi^\gamma_\alpha,A\}$
consisting of Tychonov spaces $X_\alpha$, $\alpha\in A$, and continuous surjective maps
$\pi_\alpha^\gamma:X_\gamma\to X_\alpha$ for $\alpha\le\gamma$
from $A$ such that $\pi_\alpha^\gamma=\pi_\alpha^\beta\circ
\pi_\beta^\gamma$ for every elements $\alpha\le\beta\le\gamma$ of
$A$. Let $$\lim {\mathcal S}=\{(x_\alpha)_{\alpha\in A}\in\prod_{\alpha\in A}X_\alpha:\forall\alpha,\beta\in A\;\; \alpha\le\beta\Rightarrow x_\alpha=\pi^\beta_\alpha(x_\beta)\}\subset \prod_{\alpha\in A}X_\alpha$$ denote the limit space of the
spectrum ${\mathcal S}$. 

For a directed subset $B$
of $A$ by ${\mathcal S}|B$ we denote the subspectrum ${\mathcal
S}|B=\{X_\alpha,\pi_\alpha^\gamma,B\}$ of ${\mathcal S}$,
consisting of the spaces $X_\alpha$ and the projections
$\pi_\alpha^\gamma$ for which $\alpha,\gamma\in B$. Given a
collection $\{f_\alpha:X\to X_\alpha\}_{\alpha\in A}$ of maps from a
space $X$ into the spaces of the spectrum ${\mathcal S}$ such that
$\pi_\alpha^\gamma\circ f_\gamma=f_\alpha$ for every
$\alpha\le\gamma$ in $A$ by $\lim f_\alpha:X\to\lim{\mathcal S}$
we denote the induced map into the limit space of ${\mathcal S}$.

A spectrum ${\mathcal S}=\{X_\alpha,\pi_\alpha^\gamma,A\}$ is
defined to be
\begin{itemize}
\item {\em continuous\/} if for every chain $B\subset A$ having
 supremum $\beta=\sup B$ the map $\lim\nolimits_{\alpha\in B}
\pi_\alpha^\beta:X_\beta\to\lim{\mathcal S}|B$ is a homeomorphism;
\item {\em open\/}
 if the projections $\pi_\alpha^\gamma:X_\gamma\to
X_\alpha$ are open and surjective for all $\alpha\le\gamma$ in
$A$;
\item {\em $\omega$-directed} (resp. {\em $\w$-complete}\/) provided so
is its index set $A$;
\item a {\em $\omega$-spectrum\/} if it is
$\omega$-directed and each space $X_\alpha$, $\alpha\in A$, is
second countable;
\item {\em factorizable\/} if every continuous map $f:\lim {\mathcal S}\to{\mathbb R}$ can be
written  as $f=f_\alpha\circ \pi_\alpha$ for some $\alpha\in A$
and some continuous map $f_\alpha:X_\alpha\to {\mathbb R}$.
\end{itemize}

According to \cite[3.1.5]{FC} a continuous $\omega$-complete spectrum
${\mathcal S}$ with surjective bonding maps is factorizable if and
only if every {\em bounded\/} continuous map $f:\lim{\mathcal
S}\to{\mathbb R}$ can be written as $f=f_\alpha\circ\pi_\alpha$
for some $\alpha\in A$ and some bounded continuous map
$f_\alpha:X_\alpha\to{\mathbb R}$. By another result of
\cite[3.1.7]{FC} a continuous $\omega$-complete open spectrum ${\mathcal
S}=\{X_\alpha,\pi_\alpha^\gamma, A\}$ is factorizable provided the limit space $\lim\mathcal S$ is countably cellular (that is contains no uncountable family of disjoint open sets). 

In fact, the proof of Proposition 3.1.7 of \cite{FC} can be modified to get the following more general statement, cf. \cite[3.2]{BCF}.

\begin{proposition}\label{factor} Suppose ${\mathcal S}=\{X_\alpha,\pi_\alpha^\gamma,A\}$
is a $\w$-spectrum and $X\subset\lim{\mathcal S}$ is
a weakly Lindel\"of subspace of its limit such that the
restrictions $\pi_\alpha|X:X\to X_\alpha$, $\alpha \in A$, of the limit projections
is open and surjective. Then every map
$f:X\to Y$ to a second countable space $Y$ can be written as
$f=f_\alpha\circ\pi_\alpha|X$ for some $\alpha\in A$ and some map
$f_\alpha:X_\alpha\to Y$. In particular, $X$ is
$C$-embedded into $\lim{\mathcal S}$ and hence $\lim{\mathcal S}$ is a Hewitt completion of $X$.
\end{proposition}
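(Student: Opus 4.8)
```latex
The plan is to adapt the proof of \cite[3.1.7]{FC} by replacing the countable
cellularity hypothesis with weak Lindel\"ofness, and by working with a general
second countable target $Y$ instead of $\IR$. Since $Y$ embeds in $\IR^\w$, it
suffices to handle the case $Y=\IR$ (handle each coordinate separately and then
combine using the $\w$-directedness of $A$ to find a common index). So fix a
bounded continuous function $f:X\to\IR$; the goal is to find $\alpha\in A$ and a
map $f_\alpha:X_\alpha\to\IR$ with $f=f_\alpha\circ(\pi_\alpha|X)$. The key is to
locate an index $\alpha$ at which $f$ factors, meaning $f$ is constant on each
fiber $(\pi_\alpha|X)^{-1}(x_\alpha)$; once we have this, $f_\alpha$ is defined
by $f_\alpha(x_\alpha)=f(x)$ for any $x$ in the fiber, and continuity of
$f_\alpha$ follows from openness of $\pi_\alpha|X$.

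To find such an $\alpha$, I would run the standard closing-off argument using
the $\w$-completeness that one extracts from the $\w$-directedness of $A$.
Starting from an arbitrary $\alpha_0$, build an increasing sequence
$\alpha_0\le\alpha_1\le\cdots$ in $A$ and take $\alpha=\sup_n\alpha_n$. At each
stage, if $f$ fails to factor through $\pi_{\alpha_n}$, witness this by a pair of
points in a common fiber on which $f$ takes different values, and use continuity
together with the basic open sets of the second countable spaces $X_{\alpha_m}$
($m\le n$) to push the obstruction to a slightly larger index. The weak
Lindel\"of hypothesis enters precisely here: the family of open sets on which
$f$ oscillates by more than a prescribed amount within fibers forms an open
cover of a relevant subspace, and weak Lindel\"ofness lets us extract a
\emph{countable} subfamily whose union is dense. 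Density is enough because $f$
and $f_\alpha\circ\pi_\alpha$ are continuous and agree on a dense set, so they
agree everywhere; this is the mechanism that lets a countable amount of
information (fitting inside a single $\sup$ of a countable chain) control the
behavior of $f$ on all of $X$.

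The main obstacle I anticipate is verifying that the factoring map $f_\alpha$ is
genuinely \emph{continuous} and \emph{well-defined}, rather than merely defined
fiberwise. Well-definedness requires that $f$ really is constant on fibers of
$\pi_\alpha|X$, which the closing-off construction should guarantee in the
limit, but one must check carefully that passing to the supremum of the chain
does not reintroduce oscillation---here the continuity and $\w$-completeness of
the spectrum are used to identify $X_\alpha$ with $\lim(\mathcal S|B)$ for the
chain $B=\{\alpha_n\}$, so a fiber over $X_\alpha$ is the intersection of the
fibers over the $X_{\alpha_n}$. Continuity of $f_\alpha$ is where openness of the
projections is indispensable: for an open $U\subset\IR$, the set
$f_\alpha^{-1}(U)$ has preimage $(\pi_\alpha|X)^{-1}(f_\alpha^{-1}(U))=f^{-1}(U)$
open in $X$, and applying the open map $\pi_\alpha|X$ (surjective onto
$X_\alpha$) shows $f_\alpha^{-1}(U)=\pi_\alpha|X\bigl(f^{-1}(U)\bigr)$ is open.

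Finally, the ``in particular'' clause follows quickly. Once every bounded
continuous real-valued $f$ on $X$ factors through some second countable
$X_\alpha$, it extends over $\lim\mathcal S$ via $f_\alpha\circ\pi_\alpha$
(recall $X_\alpha$ is second countable, hence real complete by
\cite[3.11.12]{En}, and the unbounded case reduces to the bounded one as in
\cite[3.1.5]{FC}). Thus $X$ is $C$-embedded in $\lim\mathcal S$. Since each
$X_\alpha$ is second countable and therefore real complete, $\lim\mathcal S$,
being a closed subspace of the product $\prod_\alpha X_\alpha$ of real complete
spaces, is itself real complete; a $C$-embedded dense subspace of a real
complete space with this extension property is exactly a Hewitt completion, so
$\lim\mathcal S=\upsilon X$.
```
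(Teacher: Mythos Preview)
Your overall strategy matches the paper's: the paper gives no proof and simply says that the argument is obtained by modifying \cite[3.1.7]{FC} (cf.\ \cite[3.2]{BCF}), which is exactly what you propose. The reduction to $Y=\IR$ via $\IR^\w$ and $\w$-directedness, the role of openness of $\pi_\alpha|X$ in making $f_\alpha$ continuous, and the derivation of the ``in particular'' clause are all correct.

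There is, however, a genuine confusion in your closing-off argument. You write that you will ``take $\alpha=\sup_n\alpha_n$'' and that ``the continuity and $\w$-completeness of the spectrum are used to identify $X_\alpha$ with $\lim(\mathcal S|B)$.'' But a $\w$-spectrum is only assumed to be $\w$-\emph{directed}, not $\w$-complete, and continuity of the spectrum is not assumed at all (these hypotheses are present in \cite[3.1.7]{FC} but are precisely what Proposition~\ref{factor} drops). You cannot extract $\w$-completeness from $\w$-directedness, so $\sup_n\alpha_n$ need not exist. The fix is simple and in fact makes the argument easier: take $\alpha$ to be any upper bound of $\{\alpha_n:n\in\w\}$, which exists by $\w$-directedness. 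Since $\alpha\ge\alpha_n$ for all $n$, each fiber of $\pi_\alpha|X$ is \emph{contained in} the intersection of the corresponding fibers of the $\pi_{\alpha_n}|X$; hence any constancy you have arranged on those intersections is inherited automatically on the (smaller) $\alpha$-fibers. No identification of $X_\alpha$ with a limit is needed.

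A second, smaller point: your density argument (``$f$ and $f_\alpha\circ\pi_\alpha$ agree on a dense set'') presupposes that $f_\alpha$ is already well defined, which is what you are trying to establish. The clean way around this is to use weak Lindel\"ofness to arrange, for each pair $V,V'$ in a countable base of $Y$ with disjoint closures, that $\pi_\alpha(f^{-1}(V))$ and $\pi_\alpha(f^{-1}(V'))$ have disjoint closures in $X_\alpha$; this yields well-definedness and continuity of $f_\alpha$ simultaneously, without circularity.
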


We recall that a subspace $X$ of a topological space $Y$ is {\em $C$-embedded} in $Y$ if each continuous functions $f:X\to \IR$ extends to a continuous function $\bar f:Y\to\IR$.

The following theorem gives a spectral characterization of openly factorizable spaces.

\begin{theorem}\label{spectral} A (weakly Lindel\"of) topological space $X$ is openly factorizable (if and) only if $X$ is a dense subspace of the limit space $\lim{\mathcal S}$ of an open $\w$-spectrum $\mathcal S=\{X_\alpha,\pi_\alpha^\gamma,A\}$ such that for every $\alpha\in A$ the restriction $\pi_\alpha|X:X\to X_\alpha$ of the limit projection is open and surjective.
\end{theorem}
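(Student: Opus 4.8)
\textbf{Proof proposal for Theorem~\ref{spectral}.}

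The plan is to prove both implications separately, treating the "only if" direction (constructing a spectrum from an openly factorizable space) as the substantial half and deriving the "if" direction quickly from Proposition~\ref{factor}.

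First I would dispose of the easier sufficiency direction. Suppose $X$ is a weakly Lindel\"of dense subspace of $\lim\mathcal S$ for an open $\w$-spectrum $\mathcal S$ with every restriction $\pi_\alpha|X:X\to X_\alpha$ open and surjective. To see that $X$ is openly factorizable, take any continuous map $f:X\to Y$ into a second countable $Y$. By Proposition~\ref{factor} the map $f$ factors as $f=f_\alpha\circ(\pi_\alpha|X)$ for some $\alpha\in A$ and some $f_\alpha:X_\alpha\to Y$. Setting $p=\pi_\alpha|X$ (which is open and surjective onto the second countable space $Z=X_\alpha$) and $g=f_\alpha$ gives exactly the required factorization $f=g\circ p$. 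So $X$ is openly factorizable.

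The main work is the necessity direction, and the hard part will be organizing the bookkeeping so that the index set is genuinely $\w$-directed and the spectrum is continuous and open while still reconstructing $X$ inside its limit. I would take as index set $A$ a suitable $\w$-directed, $\w$-complete collection of continuous open surjections $p:X\to Z_p$ onto second countable spaces, partially ordered by domination ($p\le q$ iff $q$ factors through $p$ via a continuous map, which moreover is automatically open and surjective since $p,q$ are). Using that $X$ is openly factorizable, one shows $A$ is cofinal enough to separate the second countable quotients: the key closure properties to verify are (i) countable suprema exist in $A$, obtained by diagonalizing countably many open maps $p_n:X\to Z_{p_n}$ into the diagonal map into $\prod_n Z_{p_n}$ (whose image is second countable, and whose induced map onto its image is open and surjective), and (ii) for each continuous $f:X\to Y$ with $Y$ second countable there is $p\in A$ through which $f$ factors, which is precisely the hypothesis of open factorizability. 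Defining $X_p=Z_p$ and $\pi^q_p$ as the bonding maps, I would then form $\mathcal S=\{X_p,\pi^q_p,A\}$ and verify it is an open $\w$-spectrum: openness of the bonding maps follows from Lemma~\ref{l1}-type arguments (each bonding map is open because the two open surjections $p,q$ differ by it), and continuity of the spectrum follows from the $\w$-completeness of $A$ together with the diagonal construction realizing suprema as limits of subspectra.

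Finally I would check that the diagonal map $e=\lim_{p\in A}p:X\to\lim\mathcal S$ embeds $X$ as a dense subspace with each $\pi_p\circ e=p$ open and surjective. Injectivity of $e$ follows because the open maps in $A$ separate points (any two points are separated by some continuous $f:X\to\IR$, hence by some $p\in A$ factoring $f$); that $e$ is a homeomorphic embedding follows because the maps $p$ generate the topology of $X$; density of $e(X)$ and surjectivity of each $\pi_p|X=p$ are immediate from the construction, while openness of each $\pi_p|_{e(X)}$ is exactly the openness of $p$. The principal obstacle I anticipate is the simultaneous maintenance of $\w$-completeness and continuity of the spectrum: one must be careful that the collection $A$ of open surjections is closed under the relevant countable suprema and that these suprema are realized correctly as limits of subspectra, so that $\mathcal S$ is genuinely a continuous $\w$-spectrum rather than merely an $\w$-directed system; this is where the diagonal-map computation and an appeal to Lemma~\ref{l1} (guaranteeing that the factoring maps inherit openness) carry the real weight.
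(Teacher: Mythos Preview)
Your ``if'' direction is fine and matches the paper. The ``only if'' direction has the right overall architecture, but there is a real gap in step (i): the diagonal $\Delta=(p_n)_{n\in\w}:X\to\prod_n Z_{p_n}$ of countably many open surjections is \emph{not} in general open onto its image. A quick counterexample: the two open covering maps $\IR\to S^1$ given by $x\mapsto e^{2\pi i x}$ and $x\mapsto e^{2\pi i\sqrt{2}\,x}$ have as diagonal an injective continuous map onto a dense one-parameter subgroup of the torus; this map is not a homeomorphism onto its image, hence not open. So the diagonal itself cannot serve as the upper bound in $A$. The paper repairs this by applying open factorizability \emph{to the diagonal map} $\Delta$: since $\prod_n Z_{p_n}$ is second countable, there exist an open surjection $\alpha:X\to X_\alpha$ onto a second countable space and a continuous $g:X_\alpha\to\prod_n Z_{p_n}$ with $\Delta=g\circ\alpha$; then $\alpha$ (not $\Delta$) is the desired upper bound for $\{p_n\}$. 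In other words, open factorizability is needed for (i) as well as for (ii).

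Two smaller points. First, the statement only asks for an open $\w$-spectrum, i.e.\ an $\w$-directed index set with second countable spaces and open surjective bonding maps; neither $\w$-completeness nor continuity of the spectrum is required, so the ``principal obstacle'' you anticipate does not arise and the paper does not address it. Second, openness of the bonding maps $\pi^\gamma_\alpha$ needs no appeal to Lemma~\ref{l1}: from $\alpha=\pi^\gamma_\alpha\circ\gamma$ with $\alpha$ open and $\gamma$ a continuous surjection one gets $\pi^\gamma_\alpha(V)=\alpha(\gamma^{-1}(V))$ for every open $V\subset X_\gamma$, which is open.
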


\begin{proof} The ``if'' part follows immediately from Proposition~\ref{factor}. To prove the ``only if'' part, assume that a Tychonov space $X$ is openly factorizable. Let $A'$ be a set of all open continuous surjective maps $\alpha:X\to X_\alpha$ with $X_\alpha\subset\IR^\w$. The set $A$ is partially preordered by the relation: $\alpha\le\gamma$ if there is a continuous map $\pi^\gamma_\alpha:X_\gamma\to X_\alpha$ such that $\alpha=\pi^\gamma_\alpha\circ\gamma$. This map $\pi^\gamma_\alpha$ is necessarily open and surjective because the map $\alpha$ is open and surjective while $\gamma$ is continuous. Also the map $\pi^\gamma_\alpha$ is uniquely determined, which implies that $\pi^\gamma_\beta\circ \pi_\alpha^\beta=\pi^\gamma_\alpha$ for any $\alpha\le\beta\le\gamma$ in $A'$. This means that the relation $\le$ on $A'$ is transitive. The preorder $\le$ induces the equivalence relation $\cong$ on $A'$: $\alpha\cong\gamma$ if $\alpha\le\gamma$ and $\gamma\le\alpha$. Let $A$ be a subset of $A'$ intersecting each equivalence class in a single point. Then $A$ becomes a partially ordered set with respect to the order $\le$.

Let us show that the set $(A,\le)$ is $\w$-directed. Given a countable subset $C\subset 
A$ consider the diagonal product $f=\Delta_{\gamma\in C}\gamma:X\to \prod_{\gamma\in C}X_\gamma$. Taking into account that $\prod_{\gamma\in C}X_\gamma$ is second countable and $X$ is openly factorizable,  find an open surjective map $\alpha:X\to X_\alpha$ onto a second countable space $X_\alpha$ and a continuous map $g:X_\alpha\to \prod_{\gamma\in C}X_\gamma$ such that $g\circ \alpha=f$. We can assume that $X_\alpha\subset\IR^\w$
and thus $\alpha\in A'$. Moreover, we can replace $\alpha$ by an equivalent map and assume that $\alpha\in A$.
Let us show that $\alpha\ge\beta$ for each $\beta\in C$.
Consider the projection $\pr_\beta:\prod_{\gamma\in C}X_\gamma\to X_\beta$ and observe that the equality $g\circ\alpha=f$ implies $(\pr_\beta\circ g)\circ \alpha=\pr_\beta\circ f=\beta$, which means that $\alpha\ge\beta$.

Now we see that $\mathcal S=\{X_\alpha,\pi_\alpha^\gamma,A\}$ is an open $\w$-spectrum.
Let $\pi_\alpha:\lim\mathcal S\to X_\alpha$, $\alpha\in A$, be the limit projections of this spectrum. The open surjective maps $\alpha\in A$ determine a map 
$$A:X\to\lim{\mathcal S},\; A:x\mapsto (\alpha(x))_{\alpha\in A}$$ such that $p_\alpha\circ A=\alpha$ for every $\alpha\in A$. The surjectivity of the maps $\alpha\in A$ imply that the map $A:X\to\lim\mathcal S$ has dense image $A(X)\subset\lim\mathcal S$. Let us show that $A$ is a topological embedding. Given a point $x\in X$ and an open set $O(x)\subset X$ we should find an open set $U\subset\lim \mathcal S$ such that $A(x)\in U\cap A(X)\subset A(O(x))$. Since $X$ is Tychonov, there is a map $f:X\to[0,1]$ such that $x\in f^{-1}(0,1]\subset O(x)$.
The choice of the set $A$ guarantees that there is a map $\alpha:X\to X_\alpha$ in $A$ and a continuous map $g:X_\alpha\to (0,1]$ such that $g\circ \alpha=f$. Then the set $V=g^{-1}(0,1]$ is open in $X_\alpha$ and hence $U=p_\alpha^{-1}(V)$ is open in $\lim\mathcal S$. It is easy to check that this set $U$ has the required property: $A(x)\in U\cap A(X)\subset A(O(x))$.
\end{proof}

We apply the spectral characterization of openly factorizable spaces to derive the following main result of this paper. 

\begin{theorem}\label{product} Let $X,Y$ be two openly factorizable spaces. If the product $X\times Y$ is weakly Lindel\"of, then
\begin{enumerate}
\item the product $X\times Y$ is openly factorizable;
\item each continuous map $f:X\times Y\to Z$ to a Tychonov space $Z$ extends to a continuous map $\bar f:\upsilon X\times\upsilon Y\to\upsilon Z$.
\end{enumerate}
\end{theorem}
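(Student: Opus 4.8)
\noindent The plan is to reduce both parts to the spectral machinery of Theorem~\ref{spectral} and Proposition~\ref{factor} via a product spectrum. First I would invoke the ``only if'' part of Theorem~\ref{spectral} for the openly factorizable spaces $X$ and $Y$ separately (this direction needs only open factorizability, not weak Lindel\"ofness), obtaining open $\w$-spectra $\mathcal S=\{X_\alpha,\pi_\alpha^\gamma,A\}$ and $\mathcal T=\{Y_\beta,\rho_\beta^\delta,B\}$ for which $X$ is dense in $\lim\mathcal S$, $Y$ is dense in $\lim\mathcal T$, and all restricted limit projections $\pi_\alpha|X$ and $\rho_\beta|Y$ are open and surjective. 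Then I would form the product spectrum $\mathcal S\times\mathcal T=\{X_\alpha\times Y_\beta,\,\pi_\alpha^\gamma\times\rho_\beta^\delta,\,A\times B\}$ over the coordinatewise-ordered index set $A\times B$. Since a product of two second countable spaces is second countable, a product of open surjective maps is open and surjective, and a product of two $\w$-directed sets is again $\w$-directed, $\mathcal S\times\mathcal T$ is an open $\w$-spectrum.

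The central identification I would then establish is the canonical homeomorphism $\lim(\mathcal S\times\mathcal T)\cong\lim\mathcal S\times\lim\mathcal T$. Given a thread $(a_{\alpha,\beta},b_{\alpha,\beta})_{(\alpha,\beta)\in A\times B}$ of the product spectrum, the coordinate $a_{\alpha,\beta}$ does not depend on $\beta$ and $b_{\alpha,\beta}$ does not depend on $\alpha$: fixing $\alpha$ and choosing an upper bound $\beta'$ of two indices $\beta_1,\beta_2$ in $B$, the compatibility conditions together with $\pi_\alpha^\alpha=\mathrm{id}$ give $a_{\alpha,\beta_1}=a_{\alpha,\beta'}=a_{\alpha,\beta_2}$. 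Hence each thread splits into a thread of $\mathcal S$ and a thread of $\mathcal T$, and this correspondence is the required homeomorphism. Under it $X\times Y$ is a dense subspace of $\lim\mathcal S\times\lim\mathcal T=\lim(\mathcal S\times\mathcal T)$ and each restricted limit projection is $(\pi_\alpha|X)\times(\rho_\beta|Y)$, a product of open surjective maps and so itself open and surjective. Statement (1) is now immediate: because $X\times Y$ is weakly Lindel\"of, the ``if'' part of Theorem~\ref{spectral} applies to $\mathcal S\times\mathcal T$ and shows that $X\times Y$ is openly factorizable.

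For statement (2) I would first note that $X$ and $Y$ are weakly Lindel\"of. Indeed, given an open cover $\U$ of $X$, the family $\{U\times Y:U\in\U\}$ covers the weakly Lindel\"of space $X\times Y$ and hence has a countable subfamily $\{U_n\times Y:n\in\w\}$ with dense union; then $\{U_n:n\in\w\}$ is a countable subfamily of $\U$ whose union is dense in $X$, since any nonempty open $O\subset X$ meets $\bigcup_n U_n$ (as $O\times Y$ meets $\bigcup_n(U_n\times Y)$). The symmetric argument handles $Y$. Now Proposition~\ref{factor} applies to each of the three spectra and identifies $\lim\mathcal S$ with the Hewitt completion $\upsilon X$, $\lim\mathcal T$ with $\upsilon Y$, and $\lim(\mathcal S\times\mathcal T)$ with $\upsilon(X\times Y)$. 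Combined with the homeomorphism above, this yields the crucial equality $\upsilon(X\times Y)=\upsilon X\times\upsilon Y$.

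The extension asserted in (2) then follows from the universal property of the Hewitt completion. The composition of $f:X\times Y\to Z$ with the dense embedding $Z\hookrightarrow\upsilon Z$ is a continuous map from $X\times Y$ into the real complete space $\upsilon Z$; as $X\times Y$ is dense in its Hewitt completion $\upsilon(X\times Y)=\upsilon X\times\upsilon Y$, this map extends to a continuous map $\bar f:\upsilon X\times\upsilon Y\to\upsilon Z$, as required. I expect the main obstacle to be the faithful verification that $\lim(\mathcal S\times\mathcal T)=\lim\mathcal S\times\lim\mathcal T$ and that the product spectrum really meets every hypothesis (openness, $\w$-directedness, surjectivity of the restricted projections) needed to invoke Theorem~\ref{spectral} and Proposition~\ref{factor}; the remaining steps are routine assembly.
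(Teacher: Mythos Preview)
Your proposal is correct and follows essentially the same route as the paper: build the product $\w$-spectrum, identify its limit with $\lim\mathcal S\times\lim\mathcal T$, and then invoke Proposition~\ref{factor} and Theorem~\ref{spectral}. Your treatment is in fact a bit more careful than the paper's---you explicitly verify that $X$ and $Y$ inherit weak Lindel\"ofness from $X\times Y$ before applying Proposition~\ref{factor} to them individually, and you finish part~(2) via the universal property of $\upsilon(X\times Y)=\upsilon X\times\upsilon Y$ rather than the paper's two-step argument (factor through $X_\alpha\times Y_\beta$ for second countable targets, then embed $\upsilon Z\subset\IR^\kappa$).
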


\begin{proof}  By Theorem~\ref{spectral}, $X$ is a dense subspace of the limit space $\lim{\mathcal S_X}$ of an open $\w$-spectrum $\mathcal S_X=\{X_\alpha,\pi^\gamma_\alpha,A\}$ such that the restrictions $\pi_\alpha|X:X\to X_\alpha$, $\alpha\in A$, of the limit projections are open and surjective. 
By Proposition~\ref{factor}, the limit space $\lim\mathcal S_X$ is a Hewitt completion of $X$.

By the same reason, the Hewitt completion $\upsilon Y$ of $Y$ can be identified with the limit space $\lim\mathcal S_Y$ of an open $\w$-spectrum $\mathcal S_Y=\{Y_\alpha,p^\gamma_\alpha,B\}$ such that the restrictions $p_\alpha|Y:Y\to Y_\alpha$, $\alpha\in B$, of the limit projections are open and surjective.

On the product $A\times B$ consider the partial order: $(\alpha,\beta)\le(\alpha',\beta')$ if $\alpha\le\alpha'$ and $\beta\le\beta'$.
It is easy to see that the partially order set $A\times B$ is $\w$-directed.
It follows that $X\times Y$ is a subspace of the limit space $\lim \mathcal S_X\times \lim \mathcal S_Y$ of the open $\w$-spectrum 
$$\mathcal S=\{X_\alpha\times Y_\beta,\pi_\alpha^\gamma\times p_\beta^\delta,A\times B\}$$such that for every $(\alpha,\beta)\in A\times B$ the restriction $\pi_\alpha\times p_\beta:X\times Y\to X_\alpha\times Y_\beta$ is open and surjective.
Since the product $X\times Y$ is weakly Lindel\"of, we may apply Proposition~\ref{factor} and Theorem~\ref{spectral} and conclude that the product $X\times Y$ is openly factorizable and $\lim\mathcal S_X\times\lim\mathcal S_Y=\upsilon X\times\upsilon Y$ is a Hewitt completion of $X\times Y$.

Now take any continuous map $f:X\times Y\to Z$ to a second countable space $Z$. By Proposition~\ref{factor}, there is an index $(\alpha,\beta)\in A\times B$ and a continuous map $f_{(\alpha,\beta)}:X_\alpha\times Y_\beta\to Z$ such that $f=f_{(\alpha,\beta)}\circ(\pi_\alpha\times p_\beta)|X\times Y$. Then $\bar f=f_{(\alpha,\beta)}\circ(\pi_\alpha\times p_\beta)$ is a continuous extension of the map $f$ onto the product $\lim\mathcal S_X\times\lim\mathcal S_Y=\upsilon X\times\upsilon Y$.

Finally take any continuous map $f:X\times Y\to Z$ to any Tychonov space $Z$. Identify the Hewitt completion $\upsilon Z$ of $Z$ with a closed subspace of $\IR^\kappa$ for a suitable cardinal $\kappa$. The preceding case insures that the map $f$ extends to a continuous map $\bar f:\upsilon X\times\upsilon Y\to\IR^\w$. It follows that
$$\bar f(\upsilon X\times\upsilon Y)=\bar f(\overline{X\times Y})\subset \overline{f(X,Y)}\subset\overline{Z}=\upsilon Z\subset\IR^\kappa.$$
So $\bar f$ is a continuous map into $\upsilon Z$.
\end{proof}

\section{Some comments and open problems} 

In this section we discuss the relation of the  class of  openly factorizable compact spaces to other known classes of compact spaces and pose some open problems. The survey \cite{Shakh} provided the necessary information on various classes of compact spaces.

We recall that a compact space $X$ is called
\begin{itemize}
\item {\em Dugundji compact} if for each embedding $X\to Y$ to another compact space $Y$ there is a linear positive norm one operator $u:C(X)\to C(Y)$ extending continuous functions from $X$ to $Y$;
\item {\em AE(0)-space} if each continuous map $f:B\to X$ defined on a closed subspace $B$ of a zero-dimensional compact space $A$ can extended to a continuous map $\bar f:A\to X$;
\item {\em openly generated} if  $X$ is homeomorphic to the limit $\lim\mathcal S$ of an open continuous $\w$-complete $\w$-spectrum $\mathcal S=\{X_\alpha,p_\alpha^\gamma,A\}$;
\item {\em dyadic compact} if $X$ is a continuous image of the Cantor cube $\{0,1\}^\kappa$ for some cardinal $\kappa$;
\item {\em $\kappa$-adic} if $X$ is a continuous image of some $\kappa$-metrizable compact space;
\item {\em $\kappa$-metrizable} if $X$ admits a $\kappa$-metric.
\end{itemize}

We recall that a  {\em $\kappa$-metric} on $X$ is a function assigning to each point $x\in X$ and a regular closed set $F\subset X$ a non-negative number  $\rho(x,F)$ so that the following axioms hold:
\begin{enumerate}
\item $\rho(x,F)=0$ if and only if $x\in F$;
\item $\rho(x,F)\ge\rho(x,F')$ for any regular closed sets $F\subset F'$ of $X$ 
\item for any regular closed set $F$ the function $\rho(\cdot,F):x\mapsto\rho(x,F)$ is continuous with respect to the first argument;
\item for any point $x\in X$ and a linearly ordered family $\mathcal F$  of regular closed subsets of $X$, we get $\rho(x,\overline{\cup \mathcal F})=\inf_{F\in\mathcal F}\rho(x,F)$.
\end{enumerate}

By the classical result of Haydon \cite{Haydon}, the classes of Dugundji and AE(0)-compacta coincide. By \cite{Scepin81}, the classes of openly generated and $\kappa$-metrizable compacta coincide. It is well-known that each compact topological group is Dugundji compact. Each Dugundji compact is openly generated and each openly generated compact space of weight $\le\aleph_1$ is Dugundji \cite{Scepin81}. Each $\kappa$-adic compact space has countable cellularity \cite{Scepin81}. The hyperspace $\exp(\{0,1\}^{\aleph_1})$ is openly generated but not Dugundji.

The spectral characterization of openly factorizable spaces from Theorem~\ref{spectral} implies that each openly generated compact space is openly factorizable. The simplest example of an openly factorizable compact space which is not openly generated is the ordinal space $[0,\w_1]$. It is not openly generated because has uncountable cellularity. By the same reason, $[0,\w_1]$ is not $\kappa$-adic.

Thus we have the following chain of implications:

{\small
\begin{picture}(500,70)(10,-10)

\put(5,20){compact} 
\put(0,10){topological} 
\put(10,0){group} 
\put(48,10){$\Ra$}

\put(61,15){Dugundji}
\put(61,5){compact}
\put(103,10){$\Leftrightarrow$}

\put(115,10){AE(0)-compact}
\put(180,10){$\Ra$}
\put(140,25){$\Uparrow$}

\put(130,40){dyadic}
\put(180,40){$\Ra$}

\put(191,10){$\kappa$-metrizable}
\put(246,10){$\Leftrightarrow$}

\put(200,40){$\kappa$-adic}
\put(210,25){$\Uparrow$}

\put(262,15){openly}
\put(258,5){generated}
\put(298,10){$\Ra$}

\put(316,15){openly}
\put(309,5){factorizable}
\end{picture}
}

Let us observe that the classes of openly generated and openly factorizable compact spaces are preserved by open normal functors in the sense of Shchepin \cite{Scepin81}, see also \cite{TZ}. This allows us to construct many openly factorizable compacta failing to be Dugundji compact.

There is another chain of important classes of compact spaces, that is ``orthogonal'' to the chain of classes considered above.

We recall that a compact space $X$ of weight $\kappa$ is
\begin{enumerate}
\item {\em Corson compact} if $X$ embeds into the $\Sigma$-product of lines $$\Sigma=\{(x_\alpha)\in \IR^\kappa:|\{\alpha\in\kappa:x_\alpha\ne0\}|\le\aleph_0\}\subset\IR^\kappa;$$
\item {\em Eberlein compact} if $X$ embeds into the subspace $$\Sigma_0=\{(x_\alpha)\in\IR^\kappa:\forall \varepsilon>0 \;|\{\alpha\in\kappa:|x_\alpha|<\varepsilon\}|<\aleph_0\}\subset \IR^\kappa;$$
\item {\em Valdivia compact} if $X$ embeds into $\IR^\kappa$ so that $X\cap\Sigma$ is dense in $X$.
\end{enumerate}

Those properties relate as follows:
\smallskip

\centerline{Eberlein compact $\Ra$ Corson compact $\Ra$ Valdivia compact.}
\smallskip

Each Eberlein compact with countable cellularity is metrizable \cite{EbC}. So the classes of Eberlien compacta and $\kappa$-adic compacta intersect by the class of metrizable compacta.

\begin{problem} Is each openly factorizable Eberlein (or Corson) compact space metrizable?
\end{problem}

The openly factorizable space $[0,\w_1]$  is known to be Valdivia compact while $[0,\w_2]$ is not Valdivia \cite{Kalenda}. 

\begin{problem} Is each ordinal space $[0,\lambda]$ openly factorizable for each ordinal $\lambda$?
\end{problem} 

The  ordinals segments are examples of both scattered  and linearly ordered compacta. 
 We recall that a topological space $X$ is {\em scattered} if each subspace of $X$ has an isolated point. Scattered spaces need not be openly factorizable. The simplest example is the one-point compactification $\alpha\aleph_1$ of a discrete space of cardinality $\aleph_1$. This space is Eberlein compact but not linearly ordered.

The simplest example of a  linearly orderable scattered compact space that fails to be openly factorizable is the bouquet of the spaces $[0,\w_1]$ and $[0,\w]$ with points $\w_1$ and $\w$ glued together.

\begin{problem} Characterize openly factorizable spaces in the class of scattered (compact) spaces; in the class of linearly ordered (compact) spaces. 
\end{problem}
\newpage

\end{document}